\author{Gianluca Gorni\\ Universit\`a di Udine\\ Dipartimento di Matematica e Informatica\\ via delle Scienze~208, 33100 Udine, Italy\\ \texttt{gianluca.gorni@uniud.it}\\[10pt] Gaetano  Zampieri\\ Universit\`a di Verona\\ Dipartimento di Informatica\\ strada Le Grazie 15, 37134 Verona, Italy\\ \texttt{gaetano.zampieri@univr.it}}
\newcommand{\R}{\mathbb{R}}
\newcommand{\N}{\mathbb{N}}
\newtheorem{Theorem}{Theorem}
\newtheorem{Lemma}{Lemma}
\theoremstyle{remark}
\newtheorem{Observation}{Observation}
\newtheorem{Example}{Example}
\theoremstyle{definition}
\newtheorem{Definition}{Definition}
\title{Revisiting Noether's Theorem\\ on constants of motion}
\begin{document}

\maketitle

\begin{abstract}
In this paper we revisit Noether's theorem on the constants of motion for Lagrangian mechanical systems in the ODE case, with some new perspectives on both the theoretical and the applied side. We make full use of invariance up to a divergence, or, as we call it here, Bessel-Hagen (BH) invariance. By recognizing that the Bessel-Hagen (BH) function need not be a total time derivative, we can easily deduce nonlocal constants of motion. We prove that we can always trivialize either the time change or the BH-function, so that, in particular, BH-invariance turns out not to be more general than Noether's original invariance. We also propose a version of time change that simplifies some key formulas. Applications include Lane-Emden equation, dissipative systems, homogeneous potentials and superintegrable systems. Most notably, we give two derivations of the Laplace-Runge-Lenz vector for Kepler's problem that require space and time change only, without BH invariance, one with and one without use of the Lagrange equation.
\end{abstract}

%


\section{Introduction}

Emmy Noether's theorem on conserved quantities in mechanical systems is widely celebrated as a fundamental tool in modern theoretical physics. Noether's original 1918 work~\cite{NoetherOriginal} was motivated by general relativity, and to this day most applications of the theorem are in field theory. Mathematically oriented treaties that prove Noether's theorem, such as the ones by Olver~\cite{Olver} and Giaquinta and Hildebrandt~\cite{GiaquintaHildebrandt}, are in a partial differential equation setting and use advanced tools. Seminal papers in the field are the ones by Trautman~\cite{Trautman} and Rund~\cite{Rund}. For historical accounts of how Noether's theorem has been used, the reader can refer for example to Sarlet and Cantrijn's review~\cite{SarletCantrijn}, and the books by Giaquinta and Hildebrandt~\cite{GiaquintaHildebrandt},  Olver~\cite{Olver}, Neuenschwander~\cite{Neuenschwander} and Kosmann-Schwarzbach~\cite{Kosmann}.

It is lamentable that, when we come to classical mechanics with finite degrees of freedom, most textbooks give short shrift to Noether's theorem. For example, Arnold~\cite{Arnold} only proves an easier special case, that is not powerful enough to yield the conservation of energy for autonomous systems. Leach~\cite{Leach1,Leach2} deplores that, all too often, weak versions of Noether's theorem are presented as the real thing. There have been some authors that tried to restore Noether's theorem to its rightful power in introductory classical mechanics, as for example L\'evy-Leblond~\cite{Leblond} and Desloge and Karch~\cite{DeslogeKarch}.

The present work proposes yet one more rethinking of the theory, and a systematization and expansion of the applications. We have two modest claims to breaking a little new ground in this well-tilled field of knowledge. The first one is methodological: we assume the very basic notions of a beginner's course in calculus of variations in one independent variable (Hamiltonian action, Lagrange equation), but we do not use the geometric language of manifolds, vector fields and generalized vector fields. The other point is that, in spite of our severely restricted toolbox, we both reobtain classical results in a different way and naturally extend the theory of constants of motion in the nonlocal direction.

To start with, we have isolated in Lemma~\ref{lemmaOfIntegralFreeFormula} of Subsection~\ref{definitionOfSpaceChange} a step that is in common in all the various formulations of Noether's theorem: it is a basic formula about how the Hamiltonian action changes when the first variation is allowed to have variable endpoints. The simplest version of Noether's theorem (Subsection~\ref{NoetherWithSpaceChangeOnlySubsection}) will be an immediate corollary of that, assuming that the action is (infinitesimally) invariant under space change.

Invariance ``up to a divergence'' was Bessel-Hagen's 1921 generalization~\cite{BesselHagen} of the strict invariance of the original paper by Noether. Since the ``divergence'' only makes sense in a PDE setting, we are going to call this concept ``BH-invariance'', or ``invariance up to a BH-function''. For its ease of use and full power, BH-invariance under space change is our favourite setting for Noether's theorem, of which we will give three variants in Section~\ref{gaugeInvarianceSection}. The version in Theorem~\ref{NoetherWithSpaceChangeAndGauge} is the most natural to justify and to prove. However, we came to discover that it was more general than we had initially expected: it allows for a constant of motion that is not a point function of time, position and velocity, since it involves an integral. The nonlocal constant of motion looks like this:
\begin{equation}\label{integralConsantOfMotionWithGauge}
  t\mapsto
  \partial_{\dot q}
  L\bigl(t,q(t),\dot q(t)\bigr)\cdot
  \partial_\varepsilon q_\varepsilon(t)
  \big|_{\varepsilon=0}
  -\int_{t_0}^t
  \frac{\partial}{\partial\varepsilon}
  L\bigl(s,q_\varepsilon(s),\dot q_\varepsilon(s)\bigr)
  \Big|_{\varepsilon=0}ds\,,
\end{equation}
and does not require special assumptions on the variation~$q_\varepsilon$. The formal statement is in Theorem~\ref{theoremOfVeryGeneralConstantOfMotion} of Subsection~\ref{integralConstantsOfMotionSubsection}. Nonlocal Noetherian constants of motion have been derived for example by Govinder and Leach~\cite{GovinderLeach1995}, but starting from transformations that were themselves nonlocal, while here we get them also from $q_\varepsilon$ that are point functions of $t,q,\dot q$.

We found some neat applications of this theorem: dissipative system (Subsection~\ref{integralConstantsOfMotionSubsection} and Section~\ref{dissipativeSystemsSection}) and the conservative systems with homogeneous potential (Section~\ref{homogeneousPotentialsSection}) exhibit a constant of motion that involves energy and Hamiltonian action. These results seem to be new. Conceivable uses of integral constants of motion may include the study of asymptotic properties of the solution (for example, in Section~\ref{LaneEmdenSection} we find a Lyapunov function for some classes of the Lane-Emden equation), and the design of numerical methods for solving differential equations that preserve a known integral constant of motion, just as there are methods that preserve point-function first integrals already.

Theorem~\ref{NoetherTheoremWithTotalDerivative} in Subsection~\ref{totalDerivativeSubsection} is our third version of Noether's theorem for BH-invariance under space change. It imposes restrictions on the BH-function (the ``total derivative condition'') and on the space change, which ensure that the constant of motion will be an old familiar function of $t,q,\dot q$. We will follow the unifying scheme of Theorem~\ref{NoetherTheoremWithTotalDerivative} in many of the later examples. 

In Section~\ref{GaugeSpaceAndTimeSection} we show how time change can be added to Noether's theorem while avoiding the language of vector fields. Perhaps surprisingly, we have found that time change can be done in two different ways: the one in Subsection~\ref{alternativeApproachToTimeSubsection} is more complicated  but it produces formulas for first integrals (formula~\eqref{constantAlongMotionForAlternativeSpaceTimeChangeAndGauge}) that match exactly what is found in textbooks; the one described in Subsection~\ref{invariancesWithSpaceAndTimeChangeAndGaugeSubsection} seems to be new, it is easier to describe and it leads to simpler expressions for first integrals (formula~\eqref{constantAlongMotionForSpaceTimeChangeAndGauge}). Actually, the two methods are perfectly equivalent (Theorem~\ref{equivalenceOfAlternativeInvariance}) and all examples can be worked out in both ways, one or the other being somewhat more elegant depending on the situation at hand. In this paper we have chosen to give priority to the newer approach in the main theory and in the examples, relegating the more standard one to Subsection~\ref{alternativeApproachToTimeSubsection} and to part of Section~\ref{sectionRungeLenz}.

Beside the two interpretation of time change, Section~\ref{GaugeSpaceAndTimeSection} will also deal with the interchangeability of time change and BH-functions. We can derive constants of motion, as did Emmy Noether, with space and time change only (Theorem~\ref{originalNoetherWithSpaceAndTime}), but also with BH-invariance under space and time change (Theorems~\ref{fullNoetherWithSpaceAndTimeAndGauge} and~\ref{alternativeFullNoetherWithSpaceAndTimeAndGauge}). On the face of it, Bessel-Hagen's formulation would appear the most general. Boyer~\cite{Boyer} in~1967 noticed that we can always ``trivialize time change'' with a simple transformation, and obtain the same constants of motion using space change and a modified BH-function. Olver~\cite[Exercise~5.33]{Olver} remarks without details that for each conservation law of the Euler-Lagrange equation there is a corresponding strict (i.e., with time change and null BH-function) variational symmetry which gives rise to it via Noether's theorem. Our contribution here (Theorems~\ref{EquivalenceOfThreeVersionsTheorem} and~\ref{EquivalenceOfThreeVersionsUsingInverseOfThetaTheorem}) is an explicit formula that ``trivializes the BH-function'' if we are given space change, time change and BH-function, and get the results via a modified time change and null BH-function. In other words, Emmy Noether's formulation is not less general than Bessel-Hagen's version because of a direct, constructive transformation, and not just through the non-elementary converse of Noether's theorem. In each single example it will be a matter of taste which one to use: either time change or BH-function, or both together.

In the second half of the paper we will review several examples of applications of our version of Noether's theorem to classical mechanics systems. We do not have a general symmetry theory behind our choices of space changes, time changes and BH-functions. Some will be familiar from the literature and the others have been found by formula inspection, case by case. We hope that readers with a different background will detect geometric patterns underlying these calculations.

In Section~\ref{EnergyMomentumAngularMomentumSection} we do not add anything new to the classical discussion of momentum and angular momentum, which need neither time change nor BH-function. With conservation of energy we highlight how the time change and the BH-function approach relate to each other, and also how Theorem~\ref{NoetherWithSpaceChangeAndGauge} allows for a BH-function that is not a point function of~$t,q,\dot q$.

In Section~\ref{harmonicOscillatorSection} we take the very familiar harmonic oscillator system and try out some space changes~$q_\varepsilon$ on it, one of them nonlocal. This is intended as an exercise and also to address with examples a superficial objection to our theory: there would be too many constants of motion, one for each arbitrary space change~$q_\varepsilon$ we plug into formula~\eqref{integralConsantOfMotionWithGauge}, and from the total derivative condition, which may seem to be too easy to satisfy. The answer is that a random $q_\varepsilon$ will yield a perfectly fine nonlocal constant of motion. There is also no risk of proliferation of first integrals from trivial solutions to the total derivative condition: the resulting first integrals can very well be trivial too.

In Section~\ref{dissipativeSystemsSection} we expand on the already mentioned dissipative systems. When the potential is homogeneous of degree~2 the constant of motion~\eqref{integralConsantOfMotionWithGauge} becomes an ordinary first integral, providing a new approach to a well-known result by Logan~\cite{Logan}.

In Section~\ref{LaneEmdenSection} we derive integral constants of motion for the Lane-Emden equation, using two distinct combinations of space change and BH-function. The constants of motion when $n\ne5$ may be new, and have a useful dynamical interpretation at least when $n$ is odd and~$\ge7$.

In Section~\ref{homogeneousPotentialsSection} we show that for Lagrangians of the form $L=\frac{1}{2}m\lVert\dot q\rVert^2-U(q)$ with homogeneous potential ($U(\lambda q)=\lambda^qU(q)$) there is a constant of motion that involves energy and Hamiltonian action. In the special case~$k=-2$ this constant of motion reduces to a point-function of $t,q,\dot q$ (formula~\eqref{homogeneousPotentialFistIntegral}), which has illuminating consequences for the dynamics (formula~\eqref{homogeneousPotentialRadiusOnTime} in particular). As far as we are aware, these results were well-known to Celestial Mechanics only in the central potential case $U(q)= -k/\Vert q\rVert^2$. They also apply to Calogero's system.

In Sections~\ref{todaLattice} and~\ref{conePotentials} we make some calculations leading to nonlocal constants of motion for the nonperiodic Toda lattice and, more generally, for ``cone potentials''.

In Section~\ref{sectionRungeLenz} we deduce the Laplace-Runge-Lenz vector constant for Kepler's problem in two different ways. The first uses the Lagrange equation, with a space variation~$q_\varepsilon$ which is simpler than usual in the literature. The second approach does not use the Lagrange equation. For both approaches we show how we can trivialize either the BH-function or the time change. We also take the opportunity to show how our formulation of Noether's theorem can compute functions that are constant along single motions, and not necessarily along all solutions to Lagrange equations.

In Section~\ref{sectionisochronous} we use the technique of Theorem~\ref{NoetherTheoremWithTotalDerivative} to extend the class of superintegrable system that was found with a different method in a recent paper~\cite{Zampieri}. Finally, in Section~\ref{PlaneWaveLikeExample} we study the system of a particle in a plane-wavelike field, that we have taken from a paper~\cite{Bobillo} by Bobillo-Ares.

\subsection*{Notations}

Throughout this work, $\R^n$ will be the usual Euclidean space, and we will use the notation $x\cdot y$ for the inner product and $\lVert x\rVert$ for the norm. The symbol $q$ will denote either a vector in $\R^n$ or an $\R^n$-valued function of une variable, and the context should clarify which is the case. Given two points $x=(x_1,x_2)$, $y=(y_1,y_2)$ in the plane~$\R^2$ we set $\det(x,y):=x_1y_2-x_2y_1$. For partial derivatives of expressions with respect to a variable~$x$ we use the fractional notation $\partial/\partial x$, whilst for the partial derivatives of a named function $F$ we write more simply $\partial_xF(x,y)$ or~$\partial^2_{x,y}F(x,y)$. The gradient of a smooth scalar function $q\mapsto f(q)$ of $n$~variables will be denoted by $\partial_qf(q)$, and it will be treated as a vector in~$\R^n$. To clarify our usage without a formal definition, here is Taylor's formula for a scalar function of~$(p,q)\in\R^n\times\R^n$:
\begin{align*}
  f(p+h,q+k)={}&f(p,q)+\partial_pf(p,q)\cdot h+
  \partial_qf(p,q)\cdot k+
  o\bigl(\lVert h\rVert+\lVert k\rVert\bigr)\,.
\end{align*}


\section{Invariance under space change only}
\label{infinitesimalVariationalInvarianceSection}

The variational approach to classical mechanics starts with a smooth Lagrangian function $L(t,q,\dot q)$ defined in an open subset of $\R\times\R^n\times\R^n$ with values in~$\R$, and trajectories $q(t)$ such that $L(t,q(t),\dot q(t))$ is defined, so that we can introduce the Hamiltonian action functional as
\begin{equation}
  A_{a,b}(q):=
  \int_a^b
  L\bigl(t,q(t),\dot q(t)\bigr)\,dt\,.
\end{equation}
We then posit that the mechanical motions will be fixed-endpoint-stationary for the action functional, in the sense of the Calculus of Variations. As well known, the stationarity condition is equivalent to the Lagrange, or Euler-Lagrange, differential equations:
\begin{equation}\label{EulerLagrangeEquation}
  \frac{d}{dt}\partial_{\dot q}
  L\bigl(t,q(t),\dot q(t)\bigr)-
  \partial_qL\bigl(t,q(t),\dot q(t)\bigr)
  =0\qquad\forall t\,.
\end{equation}
Noether's contribution was to show that mechanical conservation laws can be deduced from some kind of invariance property of the action functional. Let us review these invariances, starting from the simplest case.

\subsection{Space change and Hamiltonian action}

\begin{Definition}
\label{definitionOfSpaceChange}
Given a trajectory $q(t)$, we will call \emph{space change} a smooth one-parameter family of trajectories $(\varepsilon,t)\mapsto q_\varepsilon(t)$, with values in~$\R^n$, defined for the same times~$t$ as the motion~$q(t)$, and for $\varepsilon$ close to~0, such that $q_\varepsilon(t)$ reduces to $q(t)$ when $\varepsilon=0$.
\end{Definition}

This notion of space change is general enough to encompass what we call the \emph{time-shift family} $q_\varepsilon(t):=q(t+\varepsilon)$ as a special case. We still call it ``space'' change because the Lagrangian $L(t,q_\varepsilon,\dot q_\varepsilon)$ depends on~$\varepsilon$ only through the space-velocity slots. In Section~\ref{GaugeSpaceAndTimeSection} we will let the time slot depend on~$\varepsilon$ too, by means of what we will call ``time change''. 

Consider the action functional along the family $q_\varepsilon$ as a function of~$\varepsilon$:
\begin{equation}\label{actionOnly}
  \varepsilon\mapsto A_{a,b}(q_\varepsilon)\,.
\end{equation}
One crucial and easy fact is that the derivative of the action with respect to $\varepsilon$ at $\varepsilon=0$ has an integral-free formula if we assume that $q(t)$ is a solution to the Lagrange equations:

\begin{Lemma}[Integral-free formula for the derivative of the action]\label{lemmaOfIntegralFreeFormula}
If $q(t)$ is a solution to the Lagrange equations then
\begin{equation}\label{integralFreeFormula}
  \begin{split}
  \frac{\partial A_{a,b}(q_\varepsilon)}{\partial\varepsilon}
  \Big|_{\varepsilon=0}={}&
  \partial_{\dot q}
  L\bigl(t,q(t),\dot q(t)\bigr)\cdot
  \partial_\varepsilon q_\varepsilon(t)
  \Big|_{\varepsilon=0,t=b}-\\
  {}&-\partial_{\dot q}
  L\bigl(t,q(t),\dot q(t)\bigr)\cdot
  \partial_\varepsilon q_\varepsilon(t)
  \Big|_{\varepsilon=0,t=a}\,.
  \end{split}
\end{equation}
\end{Lemma}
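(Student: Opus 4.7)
The plan is to differentiate the action $\varepsilon \mapsto A_{a,b}(q_\varepsilon)$ under the integral sign, expand via the chain rule, and then integrate by parts so that the Lagrange equation kills the interior integrand, leaving only the boundary contribution.

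Since $L$ and $q_\varepsilon$ are smooth, I can commute $\partial_\varepsilon$ with the integral and apply the chain rule in the two slots where $\varepsilon$ actually enters:
\[
\frac{\partial A_{a,b}(q_\varepsilon)}{\partial\varepsilon}\Big|_{\varepsilon=0} = \int_a^b \Bigl(\partial_qL \cdot \partial_\varepsilon q_\varepsilon + \partial_{\dot q}L \cdot \partial_\varepsilon \dot q_\varepsilon\Bigr)\Big|_{\varepsilon=0}\,dt,
\]
where $L$ and its partials are evaluated at $\bigl(t,q(t),\dot q(t)\bigr)$. Because $q_\varepsilon(t)$ is jointly smooth in $(\varepsilon,t)$, Schwarz's theorem lets me replace $\partial_\varepsilon \dot q_\varepsilon$ by $\frac{d}{dt}\partial_\varepsilon q_\varepsilon$. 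Integration by parts in the second summand then produces the boundary term
\[
\Bigl[\partial_{\dot q}L\bigl(t,q(t),\dot q(t)\bigr)\cdot \partial_\varepsilon q_\varepsilon(t)\big|_{\varepsilon=0}\Bigr]_{t=a}^{t=b}
\]
minus $\int_a^b \frac{d}{dt}\partial_{\dot q}L \cdot \partial_\varepsilon q_\varepsilon\,dt$. Combining this with the $\partial_qL$ summand leaves an integrand proportional to $\partial_qL - \frac{d}{dt}\partial_{\dot q}L$, which vanishes identically along the motion because $q(t)$ satisfies the Lagrange equation~\eqref{EulerLagrangeEquation}. What remains is exactly the right-hand side of~\eqref{integralFreeFormula}.

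There is no substantive obstacle here: the argument is the classical first-variation computation that motivates the Lagrange equation in the first place, simply reorganised so as to isolate the boundary term. The only minor points requiring care are justifying the interchange $\partial_\varepsilon \partial_t = \partial_t \partial_\varepsilon$ (covered by the joint smoothness of the family $q_\varepsilon$ built into the definition of space change) and the differentiation under the integral sign (covered by continuity of the integrand on the compact interval $[a,b]$).
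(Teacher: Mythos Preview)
Your proof is correct and follows essentially the same route as the paper's: both expand $\partial_\varepsilon L$ via the chain rule and use the Lagrange equation together with the product rule to reduce the integrand to a total derivative. The only cosmetic difference is ordering: the paper substitutes the Lagrange equation first and then recognises the result as $\frac{d}{dt}\bigl(\partial_{\dot q}L\cdot\partial_\varepsilon q_\varepsilon\bigr)$, whereas you integrate by parts first and then invoke the Lagrange equation to kill the leftover integral.
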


\begin{proof}
Using the Lagrange equation
\begin{align}\label{derivativeOfLagrangianWRTepsilon}
  \frac{\partial}{\partial\varepsilon}
  L\bigl(t,&q_\varepsilon(t),
  \dot q_\varepsilon(t)\bigr)\Big|_{\varepsilon=0}
  =\\
  ={}&\Bigl(
  \partial_q
  L\bigl(t,q(t),\dot q(t)\bigr)\Bigr)
  \cdot\partial_\varepsilon q_\varepsilon(t)
  \big|_{\varepsilon=0}
  +\partial_{\dot q}L\bigl(t,q(t),\dot q(t)\bigr)\cdot
  \bigl(\partial_\varepsilon\dot q_\varepsilon(t)
  \big|_{\varepsilon=0}\bigr)=\notag\\
  ={}&
  \Bigl(
  \frac{d}{dt}\partial_{\dot q}
  L\bigl(t,q(t),\dot q(t)\bigr)\Bigr)
  \cdot\partial_\varepsilon q_\varepsilon(t)
  \big|_{\varepsilon=0}+
  {}\notag\\&+
  \partial_{\dot q}L\bigl(t,q(t),\dot q(t)\bigr)\cdot
  \frac{d}{dt}\bigl(\partial_\varepsilon q_\varepsilon(t)
  \big|_{\varepsilon=0}\bigr)=\notag\\
  ={}&\frac{d}{dt}\Bigl(
  \partial_{\dot q}
  L\bigl(t,q(t),\dot q(t)\bigr)\cdot
  \partial_\varepsilon q_\varepsilon(t)\big|_{\varepsilon=0}
  \Bigr)\,.\notag
\end{align}
Integrating with respect to~$t$ and taking the derivative $\frac{\partial}{\partial\varepsilon}$ out of the integral sign we obtain equation~\eqref{integralFreeFormula}.
\end{proof}

If $q_\varepsilon(a)$ and $q_\varepsilon(b)$ do not depend on~$\varepsilon$, formula~\eqref{integralFreeFormula} is zero. This fact should not surprise, since $q(t)$ by definition makes the action stationary with respect to variations that keep the endpoints fixed. Lemma~\ref{lemmaOfIntegralFreeFormula} simply generalizes to the case when the endpoints of the variation are not fixed.

\subsection{Simplest version of Noether's theorem}
\label{NoetherWithSpaceChangeOnlySubsection}

Classical mechanics textbooks that have chosen to dwell on Noether's theorem as little as possible usually present the following simple version, which only uses invariance under space change (Arnold's book~\cite{Arnold}, for example).

\begin{Definition}
\label{definitionOfInvarianceUnderSpaceChange}
We will say that there is infinitesimal \emph{invariance under the space change}~$q_\varepsilon$ if
\begin{equation}\label{zeroDerivativeOfAction}
  \frac{\partial A_{a,b}(q_\varepsilon)}{\partial\varepsilon}
  \Big|_{\varepsilon=0}
  = 0\qquad\forall a,b,
\end{equation}
or, equivalently, that the Lagrangian is invariant:
\begin{equation}\label{infinitesimalInvarianceSimpleDerivated}
  \frac{\partial}{\partial\varepsilon}
  L\bigl(t,q_\varepsilon(t),\dot q_\varepsilon(t)\bigr)
  \Big|_{\varepsilon=0}=0
  \qquad\forall t\,.
\end{equation}
\end{Definition}

\begin{Theorem}[Noether's theorem with space change only]
\label{simplestTheoremWithSpaceChangeOnly}
Suppose that the Hamiltonian action $A$ is infinitesimally invariant under the space change~$q_\varepsilon$, and that $q(t)=q_0(t)$ is a solution to Lagrange equation. Then the function
\begin{equation}
  t\mapsto\partial_{\dot q}
  L\bigl(t,q(t),\dot q(t)\bigr)\cdot
  \partial_\varepsilon q_\varepsilon(t)
  \big|_{\varepsilon=0}\,.
\end{equation}
is constant in~$t$.
\end{Theorem}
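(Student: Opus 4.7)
The plan is to deduce the theorem as an immediate corollary of Lemma~\ref{lemmaOfIntegralFreeFormula}. The lemma already did all the real work: under the Lagrange equation, the derivative $\partial_\varepsilon A_{a,b}(q_\varepsilon)|_{\varepsilon=0}$ reduces to the boundary terms in~\eqref{integralFreeFormula}. The invariance hypothesis~\eqref{zeroDerivativeOfAction} tells us that this derivative vanishes for every choice of $a$ and $b$. So what remains is essentially a bookkeeping step rather than a genuine argument.

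Concretely, I would first invoke Lemma~\ref{lemmaOfIntegralFreeFormula} to write, for every $a$ and $b$ in the domain of the motion,
\begin{equation*}
  \partial_{\dot q}L\bigl(b,q(b),\dot q(b)\bigr)\cdot
  \partial_\varepsilon q_\varepsilon(b)\big|_{\varepsilon=0}
  -
  \partial_{\dot q}L\bigl(a,q(a),\dot q(a)\bigr)\cdot
  \partial_\varepsilon q_\varepsilon(a)\big|_{\varepsilon=0}
  = \frac{\partial A_{a,b}(q_\varepsilon)}{\partial\varepsilon}\Big|_{\varepsilon=0}.
\end{equation*}
Then I would apply the invariance assumption~\eqref{zeroDerivativeOfAction} to set the right-hand side to zero. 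Fixing $a$ and letting $b$ vary, this identifies the value of the function
\begin{equation*}
  t\mapsto \partial_{\dot q}L\bigl(t,q(t),\dot q(t)\bigr)\cdot
  \partial_\varepsilon q_\varepsilon(t)\big|_{\varepsilon=0}
\end{equation*}
at every $b$ with its value at $a$, giving constancy.

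As an alternative, equivalent route (which might be worth recording because it avoids an intermediate integration and recovers exactly the identity Noether originally wrote down), I would use the second form~\eqref{infinitesimalInvarianceSimpleDerivated} of the invariance hypothesis together with the intermediate identity~\eqref{derivativeOfLagrangianWRTepsilon} that already appears inside the proof of Lemma~\ref{lemmaOfIntegralFreeFormula}. That calculation shows that on a solution of the Lagrange equation one has, pointwise in~$t$,
\begin{equation*}
  \frac{\partial}{\partial\varepsilon}
  L\bigl(t,q_\varepsilon(t),\dot q_\varepsilon(t)\bigr)\Big|_{\varepsilon=0}
  = \frac{d}{dt}\Bigl(
  \partial_{\dot q}L\bigl(t,q(t),\dot q(t)\bigr)\cdot
  \partial_\varepsilon q_\varepsilon(t)\big|_{\varepsilon=0}\Bigr),
\end{equation*}
so that infinitesimal invariance directly kills the total $t$-derivative of the candidate constant of motion. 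There is no genuine obstacle to overcome here; the only thing to be careful about is simply to write the argument in a way that emphasizes that the hypothesis must hold for all pairs $a,b$ (or, equivalently, pointwise in~$t$), since a single pair would only give one equality, not constancy of the function along the whole motion.
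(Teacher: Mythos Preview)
Your proof is correct and matches the paper's own argument almost verbatim: invoke Lemma~\ref{lemmaOfIntegralFreeFormula} to express $\partial_\varepsilon A_{a,b}(q_\varepsilon)|_{\varepsilon=0}$ as a difference of boundary terms, then use the invariance hypothesis~\eqref{zeroDerivativeOfAction} to conclude that this difference vanishes for all $a,b$. Your alternative pointwise route via~\eqref{derivativeOfLagrangianWRTepsilon} is also fine and amounts to the same thing before integration.
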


\begin{proof}
Let us set
\begin{equation}\label{functionF}
  F(t)=\partial_{\dot q}
  L\bigl(t,q(t),\dot q(t)\bigr)\cdot
  \partial_\varepsilon q_\varepsilon(t)
  \big|_{\varepsilon=0}\,.
\end{equation}
From Lemma~\ref{lemmaOfIntegralFreeFormula} we have that
\begin{equation*}
  \frac{\partial A_{a,b}(q_\varepsilon)}{\partial\varepsilon}
  \Big|_{\varepsilon=0}=
  F(b)-F(a)\,.
\end{equation*}
Combining this with assumption~\eqref{zeroDerivativeOfAction}, we obtain that $F(b)-F(a)=0$ for all~$a,b$, which means that $F$ is constant.
\end{proof}

There are situations when not only the function $\varepsilon\mapsto A_{a,b}(q_\varepsilon)$ has zero derivative at~$\varepsilon=0$, but it is actually constant, or, equivalently, $L(t,q_\varepsilon(t), \allowbreak \dot q_\varepsilon(t))$ does not depend on~$\varepsilon$. These will be called \emph{finite invariances under space change}. The standard examples are the Lagrangians that are invariant under either translations or rotations (Examples~\ref{translationInvarianceExample} and~\ref{rotationalInvarianceExample} in Section~\ref{EnergyMomentumAngularMomentumSection}), whereby any smooth trajectory $q(t)$ can be embedded in a translated or rotated family~$q_\varepsilon(t)$ with the property of finite invariance. When $q(t)$ also solves the Lagrange equations, the momentum or angular momentum will be conserved respectively. Although very simple and neat, this concept of finite invariance does not seem powerful enough to cover conservation of energy, for instance.

\section{BH-invariance under space change}
\label{gaugeInvarianceSection}

\begin{Definition}\label{definitionOfGauge}
We will call \emph{BH-function} a smooth real function $G(\varepsilon,t)$, defined for the same times $t$ as the motion~$q(t)$, and for $\varepsilon$ close to~0. The trivial BH-function will simply be the constant~0.
\end{Definition}

Some authors use the term ``gauge function'' for what we call BH-function. We can modify the function~\eqref{actionOnly} by introducing a BH-function $G(\varepsilon,t)$, and then ask that the new function
\begin{equation*}
  \varepsilon\mapsto A_{a,b}(q_\varepsilon)+G(\varepsilon,b)-
  G(\varepsilon,a)
\end{equation*}
has zero derivative at~$\varepsilon=0$, for all~$a,b$. This idea leads to a version of Noether's theorem that is at the same time widely applicable and easy to state.

\subsection{Noether's theorem in terms of BH-function and space change}
\label{NoetherWithGaugeAndSpaceChangeSubsection}

\begin{Definition}
We will say that there is infinitesimal \emph{BH-invariance under space change}~$q_\varepsilon$, with $G$ as BH-function term if
\begin{equation}\label{actionPlusGauge}
  \frac{\partial}{\partial\varepsilon}\Bigl(
  A_{a,b}(q_\varepsilon)+G(\varepsilon,b)-
  G(\varepsilon,a)\Bigr)\Big|_{\varepsilon=0}=0
  \qquad\forall a,b\,,
\end{equation}
or, in term of the Lagrangian, if
\begin{equation}\label{gaugeAndSpace}
  \frac{\partial}{\partial\varepsilon}
  \Bigl(L\bigl(t,q_\varepsilon(t),\dot q_\varepsilon(t)\bigr)
  +\partial_tG(\varepsilon,t)\Bigr)
  \Big|_{\varepsilon=0}=0\,.
\end{equation}
\end{Definition}

Combined with~\eqref{integralFreeFormula}, this invariance causes the modified function
\begin{equation*}
  t\mapsto F(t)+ \partial_\varepsilon 
  G(\varepsilon,t)\big|_{\varepsilon=0}
\end{equation*}
to be a constant of motion, when $q(t)$ solves the Lagrange equations. Of course we may speak of \emph{finite} BH-invariance in the rare instances when the function~\eqref{actionPlusGauge} is constant (Example~\ref{timeShiftInvarianceExample} in Section~\ref{EnergyMomentumAngularMomentumSection}).

Here is the version of Noether's theorem that uses BH-invariance under space change. It reduces to Theorem~\ref{simplestTheoremWithSpaceChangeOnly} when the BH-function is trivial.

\begin{Theorem}[Noether's theorem with space change and BH-function]
\label{NoetherWithSpaceChangeAndGauge}
Suppose that there is infinitesimally BH-invariance under space change, and that $q(t)=q_0(t)$ is a solution to Lagrange equation. Then the function
\begin{equation*}
  t\mapsto\partial_{\dot q}
  L\bigl(t,q(t),\dot q(t)\bigr)\cdot
  \partial_\varepsilon q_\varepsilon(t)
  \big|_{\varepsilon=0}
  +\partial_\varepsilon 
  G(\varepsilon,t)\big|_{\varepsilon=0}\,.
\end{equation*}
is constant in~$t$.
\end{Theorem}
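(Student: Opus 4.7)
The plan is to mirror the proof of Theorem~\ref{simplestTheoremWithSpaceChangeOnly} and simply absorb the extra BH-boundary terms. First I would set $F(t):=\partial_{\dot q}L(t,q(t),\dot q(t))\cdot\partial_\varepsilon q_\varepsilon(t)|_{\varepsilon=0}$ exactly as in~\eqref{functionF}. Since $q(t)$ solves the Lagrange equations, Lemma~\ref{lemmaOfIntegralFreeFormula} applies verbatim and yields
\begin{equation*}
  \frac{\partial A_{a,b}(q_\varepsilon)}{\partial\varepsilon}\Big|_{\varepsilon=0}=F(b)-F(a)\,.
\end{equation*}

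Next I would invoke the BH-invariance hypothesis in the form~\eqref{actionPlusGauge}, which asserts that $\partial_\varepsilon\bigl(A_{a,b}(q_\varepsilon)+G(\varepsilon,b)-G(\varepsilon,a)\bigr)|_{\varepsilon=0}=0$ for every $a,b$. Substituting the integral-free expression for the first summand and bringing the $\varepsilon$-derivative past the $G$-terms (which are smooth in $\varepsilon$), this becomes
\begin{equation*}
  F(b)-F(a)+\partial_\varepsilon G(\varepsilon,b)\big|_{\varepsilon=0}-\partial_\varepsilon G(\varepsilon,a)\big|_{\varepsilon=0}=0\qquad\forall a,b\,.
\end{equation*}
Defining $\widetilde F(t):=F(t)+\partial_\varepsilon G(\varepsilon,t)|_{\varepsilon=0}$, the display above reads $\widetilde F(b)=\widetilde F(a)$ for all $a,b$, which is precisely the statement that $\widetilde F$ is constant in~$t$, and this is the conclusion of the theorem.

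There is essentially no serious obstacle here, since all the analytic heavy lifting is already done in Lemma~\ref{lemmaOfIntegralFreeFormula}; the only conceptual point is that the BH-function contributes purely through its boundary values and so slots naturally into the fundamental-theorem-of-calculus pattern that $F$ already satisfies. One may, for tidiness, verify en passant that the two formulations~\eqref{actionPlusGauge} and~\eqref{gaugeAndSpace} of BH-invariance are indeed equivalent: this follows by writing $G(\varepsilon,b)-G(\varepsilon,a)=\int_a^b\partial_tG(\varepsilon,t)\,dt$, differentiating under the integral sign, and using the arbitrariness of~$a,b$. When the BH-function is the trivial one, the constant~$\widetilde F$ collapses to~$F$ and Theorem~\ref{simplestTheoremWithSpaceChangeOnly} is recovered, as advertised.
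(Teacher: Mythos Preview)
Your proof is correct and follows essentially the same route as the paper's own argument: define $F$ as in~\eqref{functionF}, apply Lemma~\ref{lemmaOfIntegralFreeFormula} to convert $\partial_\varepsilon A_{a,b}(q_\varepsilon)|_{\varepsilon=0}$ into $F(b)-F(a)$, then use the BH-invariance hypothesis~\eqref{actionPlusGauge} to conclude that $F(t)+\partial_\varepsilon G(0,t)$ is independent of~$t$. The additional remarks you append about the equivalence of~\eqref{actionPlusGauge} and~\eqref{gaugeAndSpace} and about recovering Theorem~\ref{simplestTheoremWithSpaceChangeOnly} are correct but not part of the paper's proof proper.
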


\begin{proof}
As in Theorem~\ref{simplestTheoremWithSpaceChangeOnly} let us set
$F(t)=\partial_{\dot q} L(t,q(t),\dot q(t))\cdot \partial_\varepsilon q_\varepsilon(t)|_{\varepsilon=0}$. Again from Lemma~\ref{lemmaOfIntegralFreeFormula} we have that
\begin{equation*}
  \frac{\partial}{\partial\varepsilon}\Bigl( A_{a,b}(q_\varepsilon)
  +G(\varepsilon,b)-
  G(\varepsilon,a)\Bigr)
  \Big|_{\varepsilon=0}
  =F(b)-F(a)+\partial_\varepsilon 
  G(0,b)-
  \partial_\varepsilon 
  G(0,a)\,.
\end{equation*}
Combining this with assumption~\eqref{actionPlusGauge}, we obtain that $F(b)-F(a)+\partial_\varepsilon G(0,b)-\partial_\varepsilon G(0,a)=0$ for all~$a,b$, which means that $t\mapsto F(t)+\partial_\varepsilon G(0,t)$ is constant.
\end{proof}

The simplest meaningful example of infinitesimal BH-invariance occurs when the Lagrangian function $L$ does not depend on~$t$. We can embed any smooth $q(t)$ into its time-shift family 
\begin{equation}\label{timeShiftForEnergy}
  q_\varepsilon(t):= q(t+ \varepsilon)\,,
\end{equation}
whose derivatives with respect to~$t$ are the same as the derivatives with respect to~$\varepsilon$. Let us try a BH-function~$G$ of the form $G(\varepsilon, t)= \varepsilon g(t)$. The BH-invariance condition~\eqref{gaugeAndSpace} becomes
\begin{equation}\label{totalDerivativeForEnergyConservation}
  g'(t)=-\Bigl(\frac{\partial}{\partial\varepsilon}
  L\bigl(q_\varepsilon(t),\dot q_\varepsilon(t)\bigr)
  \Bigr)
  \Big|_{\varepsilon=0}=
  -\frac{d}{dt}L\bigl(q(t),\dot q(t)\bigr)\,.
\end{equation}
By inspection we see that \eqref{gaugeAndSpace} holds with the choice
\begin{equation}\label{gaugeForEnergy}
  G(\varepsilon, t):=-\varepsilon\cdot L(q(t), \dot q(t))
\end{equation}
We can deduce that when $q(t)$ solves the Lagrange equations, we have \textit{conservation of energy}. More on this in Example~\ref{timeShiftInvarianceExample} in Section~\ref{EnergyMomentumAngularMomentumSection}.

\subsection{Integral constants of motion}
\label{integralConstantsOfMotionSubsection}

Here is a reformulation of Theorem~\ref{NoetherWithSpaceChangeAndGauge} that bypasses the concept of BH-function.

\begin{Theorem}[Noether's theorem with integral constant of motion]\label{theoremOfVeryGeneralConstantOfMotion}
Suppose that $t\mapsto q(t)$ is a solution to the Lagrange equation. Then the following function is constant:
\begin{equation}\label{veryGeneralConstantAlongMotion}
  t\mapsto
  \partial_{\dot q}
  L\bigl(t,q(t),\dot q(t)\bigr)\cdot
  \partial_\varepsilon q_\varepsilon(t)
  \big|_{\varepsilon=0}-
  \int_{t_0}^t
  \frac{\partial}{\partial\varepsilon}
  L\bigl(s,q_\varepsilon(s),\dot q_\varepsilon(s)\bigr)
  \Big|_{\varepsilon=0}ds\,.
\end{equation}
\end{Theorem}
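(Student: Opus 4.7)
The plan is to reuse the key identity already derived inside the proof of Lemma~\ref{lemmaOfIntegralFreeFormula}. The chain-rule computation in equation~\eqref{derivativeOfLagrangianWRTepsilon} shows that, whenever $q(t)$ satisfies the Lagrange equation,
\begin{equation*}
\frac{\partial}{\partial\varepsilon} L\bigl(t, q_\varepsilon(t), \dot q_\varepsilon(t)\bigr)\Big|_{\varepsilon=0} = \frac{d}{dt}\Bigl(\partial_{\dot q} L\bigl(t, q(t), \dot q(t)\bigr) \cdot \partial_\varepsilon q_\varepsilon(t)\big|_{\varepsilon=0}\Bigr).
\end{equation*}
Denoting the parenthesised expression by $F(t)$, as in~\eqref{functionF}, this says $F'(t) = \partial_\varepsilon L(t, q_\varepsilon(t), \dot q_\varepsilon(t))|_{\varepsilon=0}$. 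Integrating from $t_0$ to $t$ and rearranging yields
\begin{equation*}
F(t) - \int_{t_0}^t \frac{\partial}{\partial\varepsilon} L\bigl(s, q_\varepsilon(s), \dot q_\varepsilon(s)\bigr)\Big|_{\varepsilon=0}\, ds = F(t_0),
\end{equation*}
and the right-hand side is independent of $t$; this is precisely formula~\eqref{veryGeneralConstantAlongMotion}.

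An alternative, more conceptual route would be to reduce the statement to Theorem~\ref{NoetherWithSpaceChangeAndGauge} by exhibiting an explicit BH-function that trivialises the invariance hypothesis. Take
\begin{equation*}
G(\varepsilon, t) := -\int_{t_0}^t L\bigl(s, q_\varepsilon(s), \dot q_\varepsilon(s)\bigr)\, ds,
\end{equation*}
which is smooth in $(\varepsilon, t)$ under our standing hypotheses on $L$ and the family $q_\varepsilon$. Since $\partial_t G(\varepsilon, t) = -L(t, q_\varepsilon(t), \dot q_\varepsilon(t))$, the sum $L + \partial_t G$ vanishes identically, so the BH-invariance condition~\eqref{gaugeAndSpace} holds automatically. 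Differentiating $G$ under the integral sign gives $\partial_\varepsilon G(0, t) = -\int_{t_0}^t \partial_\varepsilon L(s, q_\varepsilon, \dot q_\varepsilon)|_{\varepsilon=0}\, ds$, and Theorem~\ref{NoetherWithSpaceChangeAndGauge} then delivers exactly~\eqref{veryGeneralConstantAlongMotion} as a constant of motion.

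There is no substantive obstacle: the whole content is already packed into the chain-rule computation~\eqref{derivativeOfLagrangianWRTepsilon}. The genuinely new perspective is the recognition that, since the BH-invariance hypothesis can be made vacuous by the above choice of $G$, one cannot expect a local conserved quantity without further structural assumptions on $q_\varepsilon$; what one always obtains is the nonlocal expression~\eqref{veryGeneralConstantAlongMotion}. The only technical point to verify is the smoothness of $G$ and the legitimacy of differentiating under the integral sign, both of which are routine under the standing smoothness assumptions on $L$ and on $q_\varepsilon$.
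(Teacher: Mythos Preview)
Your proposal is correct and essentially matches the paper's own proof. The paper also gives two routes: one invoking Lemma~\ref{lemmaOfIntegralFreeFormula} via the additivity $A_{a,b}(q_\varepsilon)=A_{t_0,b}(q_\varepsilon)-A_{t_0,a}(q_\varepsilon)$ (your first argument is just the unintegrated version of this, going straight to the integrand identity~\eqref{derivativeOfLagrangianWRTepsilon}), and one via Theorem~\ref{NoetherWithSpaceChangeAndGauge} with the BH-function $G(\varepsilon,t):=-A_{t_0,t}(q_\varepsilon)$, which is exactly your second route.
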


\begin{proof}
Notice that $A_{a,b}(q_\varepsilon)=A_{t_0,b}(q_\varepsilon)-
A_{t_0,a}(q_\varepsilon)$ for all~$a,b,\varepsilon$. Take the derivative with respect to~$\varepsilon$ at $\varepsilon=0$, using Lemma~\ref{lemmaOfIntegralFreeFormula} and the function~$F$ of formula~\eqref{functionF} on the left-hand side:
\begin{equation*}
  F(b)-F(a)=\frac{\partial}{\partial\varepsilon}
  A_{t_0,b}(q_\varepsilon)-
  \frac{\partial}{\partial\varepsilon}
  A_{t_0,a}(q_\varepsilon)\,.
\end{equation*}
A simple separation of the terms in~$a$ and in~$b$ leads to
\begin{equation*}
  F(b)-\frac{\partial}{\partial\varepsilon}
  A_{t_0,b}(q_\varepsilon)=
  F(a)-\frac{\partial}{\partial\varepsilon}
  A_{t_0,a}(q_\varepsilon)\,,
\end{equation*}
which means that the expression~\eqref{veryGeneralConstantAlongMotion} is constant in~$t$.

We may also apply Theorem~\ref{NoetherWithSpaceChangeAndGauge}. The condition of BH-invariance under space change~\eqref{gaugeAndSpace} is very easy to realize: with the choice $G(\varepsilon,t):= -A_{t_0,t} (q_\varepsilon)$ the function~\eqref{actionPlusGauge} is trivially constant. Alternatively, if we set
\begin{equation}\label{veryGeneralGaugeLinearInEpsilon}
  G(\varepsilon,t):=-\varepsilon\int_{t_0}^t
  \frac{\partial}{\partial\varepsilon}
  L\bigl(s,q_\varepsilon(s),\dot q_\varepsilon(s)\bigr)
  \Big|_{\varepsilon=0}ds
\end{equation}
the function~\eqref{actionPlusGauge} has zero derivative at~$\varepsilon=0$. In either case the constant function $t\mapsto F(t)+ \partial_\varepsilon G(\varepsilon,t)|_{\varepsilon=0}$ turns out to be~\eqref{veryGeneralConstantAlongMotion}.
\end{proof}

We do not expect that all, or most, of the constants of motion that result by plugging arbitrary space changes $q_\varepsilon$ into formula~\eqref{veryGeneralConstantAlongMotion} are distinct, nontrivial or useful. However, take for example a Lagrangian of the form
\begin{equation}\label{dissipativeSystemLagrangian}
  \mathcal{L}(t,q,\dot q):=e^{ht} L(q,\dot q)
\end{equation}
and choose the time-shift space change
\begin{equation}\label{dissipativeSystemSpaceChange}
  q_\varepsilon(t):=q(t+\varepsilon)\,.
\end{equation}
We can compute
\begin{equation}\label{pseudoTotalDerivativeForDissipativeSys}
  \Bigl(\frac{\partial}{\partial\varepsilon}
  \mathcal{L}\bigl(t,q_\varepsilon(t),\dot q_\varepsilon(t)\bigr)
  \Bigr)\Big|_{\varepsilon=0}=
  e^{ht} \frac{d}{dt}L\bigl(q(t),\dot q(t))
  \,.
\end{equation}
An integration by part turns the BH-function~\eqref{veryGeneralGaugeLinearInEpsilon} into the following:
\begin{equation}\label{dissipativeSystemGauge}
  G(\varepsilon,t):=
  -\varepsilon e^{ht} L\bigl(q(t),\dot q(t)\bigr)
  +\varepsilon h\int_{t_0}^te^{hs }
  L\bigl(q(s),\dot q(s)\bigr)ds\,.
\end{equation}
The constant of motion is computed as
\begin{equation}\label{energyPlusAction}
  e^{ht} \partial_{\dot q}
  L\bigl(q(t),\dot q(t)\bigr)\cdot\dot q(t)-
  e^{ht} L\bigl(q(t),\dot q(t)\bigr)
  +h\int_{t_0}^te^{hs} L\bigl(q(s),\dot q(s)\bigr)ds\,,
\end{equation}
For a mechanical interpretation of this formula see Section~\ref{dissipativeSystemsSection}. For now it will suffice to notice that this conserved quantity~\eqref{energyPlusAction} is nontrivial and that it depends not just on the values of $t,q(t),\dot q(t)$ at the current time~$t$, but also on the values on the whole interval $[t_0,t]$. Hence, it is not a bona fide first integral. Another class of examples with an integral constant of motion are the homogeneous potentials, as we will see in Section~\ref{homogeneousPotentialsSection}.

\subsection{The total derivative condition}
\label{totalDerivativeSubsection}

There are few and precious mechanical systems for which Noether's Theorem~\ref{theoremOfVeryGeneralConstantOfMotion} does yield a true first integral, in the sense of a point function of $t,q(t),\dot q(t)$ that is constant along the motions. For the study of these systems there is a unifying general scheme that we are going to describe next, and that will be adhered in Sections~\ref{EnergyMomentumAngularMomentumSection} through~\ref{PlaneWaveLikeExample}, devoted to examples.

\begin{Definition}\label{definitionOfTotalDerivativeCondition}
We will say that the \emph{total derivative condition} is satisfied with the smooth scalar function $\psi(t,q,\dot q)$ if \begin{equation}\label{totalDerivativeCondition}
  \Bigl(\frac{\partial}{\partial\varepsilon}
  L\bigl(t,q_\varepsilon(t),\dot q_\varepsilon(t)\bigr)
  \Bigr)\Big|_{\varepsilon=0}=
  \frac{d}{dt}\psi\bigl(t,q(t),\dot q(t)\bigr)
  \quad\forall t\,.
\end{equation}
\end{Definition}

Formula~\eqref{totalDerivativeForEnergyConservation}, that we wrote to derive conservation of energy, is equivalent to a total derivative condition where $\psi$ is simply~$L$. Compare that with~\eqref{pseudoTotalDerivativeForDissipativeSys}, whose right-hand side does not look like a total derivative.

\begin{Theorem}[Noether's theorem with total derivative condition]\label{NoetherTheoremWithTotalDerivative}
Suppose that  the total derivative condition is satisfied with~$\psi$, and there exists a vector-valued function~$\varphi(t,q,\allowbreak\dot q)$ such that
\begin{equation}\label{conditionOnDerivativeWithRespectToEpsilon}
  \partial_\varepsilon q_\varepsilon(t)\big|_{\varepsilon=0}=
  \varphi\bigl(t,q(t),\dot q(t)\bigr)
  \qquad\forall t\,,
\end{equation}
and, as usual, that $t\mapsto q(t)$ is a solution to the Lagrange equation. Then the following function is constant if evaluated along $q(t)$:
\begin{equation}\label{firstIntegralForTotalDerivative}
  (t,q,\dot q)\mapsto
  \partial_{\dot q}L(t,q,\dot q)\cdot\varphi(t,q,\dot q)-
  \psi(t,q,\dot q)\,.
\end{equation}
\end{Theorem}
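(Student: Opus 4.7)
The plan is to reduce this theorem to Theorem~\ref{NoetherWithSpaceChangeAndGauge} (or alternatively to Theorem~\ref{theoremOfVeryGeneralConstantOfMotion}) by exhibiting a BH-function that converts the total derivative condition into BH-invariance under space change. The main work is just selecting the right gauge and verifying it fits the hypotheses.

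First, I would set
\begin{equation*}
  G(\varepsilon,t):=-\varepsilon\,\psi\bigl(t,q(t),\dot q(t)\bigr),
\end{equation*}
so that $\partial_tG(\varepsilon,t)=-\varepsilon\frac{d}{dt}\psi(t,q(t),\dot q(t))$ along the fixed reference solution $q(t)$. Differentiating with respect to $\varepsilon$ at $\varepsilon=0$ gives
\begin{equation*}
  \partial_\varepsilon\partial_tG(\varepsilon,t)\big|_{\varepsilon=0}
  =-\frac{d}{dt}\psi\bigl(t,q(t),\dot q(t)\bigr),
\end{equation*}
which, combined with the total derivative condition~\eqref{totalDerivativeCondition}, gives precisely
\begin{equation*}
  \frac{\partial}{\partial\varepsilon}\Bigl(L\bigl(t,q_\varepsilon(t),\dot q_\varepsilon(t)\bigr)+\partial_tG(\varepsilon,t)\Bigr)\Big|_{\varepsilon=0}=0.
\end{equation*}
This is exactly the BH-invariance condition~\eqref{gaugeAndSpace}.

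Next I would apply Theorem~\ref{NoetherWithSpaceChangeAndGauge} to conclude that the function
\begin{equation*}
  t\mapsto\partial_{\dot q}L\bigl(t,q(t),\dot q(t)\bigr)\cdot\partial_\varepsilon q_\varepsilon(t)\big|_{\varepsilon=0}+\partial_\varepsilon G(\varepsilon,t)\big|_{\varepsilon=0}
\end{equation*}
is constant. Substituting $\partial_\varepsilon q_\varepsilon(t)|_{\varepsilon=0}=\varphi(t,q(t),\dot q(t))$ from hypothesis~\eqref{conditionOnDerivativeWithRespectToEpsilon} and $\partial_\varepsilon G(\varepsilon,t)|_{\varepsilon=0}=-\psi(t,q(t),\dot q(t))$ gives exactly the value of~\eqref{firstIntegralForTotalDerivative} evaluated along $q(t)$, finishing the proof.

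There is essentially no hard step: the argument is a matter of packaging. The only subtlety worth flagging is that $G$ is defined with reference to the specific solution $q(t)$, not as a universal function on phase space; this is legitimate since Definition~\ref{definitionOfGauge} only requires $G$ to be smooth in $(\varepsilon,t)$ for the same times as $q(t)$. As an alternative route, one can bypass Theorem~\ref{NoetherWithSpaceChangeAndGauge} and appeal directly to Theorem~\ref{theoremOfVeryGeneralConstantOfMotion}: the total derivative condition turns the integrand in~\eqref{veryGeneralConstantAlongMotion} into $\frac{d}{ds}\psi(s,q(s),\dot q(s))$, so the integral evaluates to $\psi(t,q(t),\dot q(t))-\psi(t_0,q(t_0),\dot q(t_0))$, and absorbing the constant $\psi(t_0,q(t_0),\dot q(t_0))$ yields~\eqref{firstIntegralForTotalDerivative}.
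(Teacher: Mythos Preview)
Your proof is correct and follows exactly the paper's approach: the paper's entire proof is the one-line instruction to apply Theorem~\ref{NoetherWithSpaceChangeAndGauge} with $G(\varepsilon,t):=-\varepsilon\cdot\psi(t,q(t),\dot q(t))$, and you have simply spelled out the verification that this~$G$ satisfies~\eqref{gaugeAndSpace} and yields~\eqref{firstIntegralForTotalDerivative}. Your remark about $G$ depending on the particular solution and your alternative route via Theorem~\ref{theoremOfVeryGeneralConstantOfMotion} are both sound additions but not required.
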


\begin{proof}
Simply apply Theorem~\ref{NoetherWithSpaceChangeAndGauge} with  $G(\varepsilon,t):=-\varepsilon\cdot\psi(t,q(t),\dot q(t))$.
\end{proof}

We did not give a name to condition~\eqref{conditionOnDerivativeWithRespectToEpsilon} because it is automatically satisfied whenever $q_\varepsilon(t)$ is a point function of $\varepsilon, t,q(t),\dot q(t),\allowbreak q(t\pm \varepsilon)$. More generally, a property of all $q_\varepsilon(t)$ of this form is that when $\varepsilon=0$ all their successive partial derivatives with respect to~$\varepsilon$ and~$t$ are point functions of $t,q(t),\dot q(t),\ddot q(t)$\dots{}  For example for the time-shift family $q_\varepsilon (t)=q(t+\varepsilon)$ of formulas~\eqref{timeShiftForEnergy} and~\eqref{dissipativeSystemSpaceChange} we have $\partial_\varepsilon q_\varepsilon(t)|_{\varepsilon=0}=\dot q(t)$. Actually, we could reach the same final conclusions with space variations of the more special form $q_\varepsilon(t)=q(t)+ \varepsilon \cdot \varphi(t,q(t),\dot q(t))$, but we have we have used a delay term $q(t\pm\varepsilon)$ in some examples when we felt it led to simpler formulas.

Of course, a function $\psi$ that trivially satisfies the total derivative condition~\eqref{totalDerivativeCondition} always exists:
\begin{equation}\label{trivialPsi}
  \psi(t,Q,\dot Q):=\int_{t_0}^t
  \Bigl(\frac{\partial}{\partial\varepsilon}
  L\bigl(s,q_\varepsilon(s),\dot q_\varepsilon(s)\bigr)
  \Bigr)\Big|_{\varepsilon=0}ds
\end{equation}
(with dummy dependence on the variables $Q,\dot Q$), and it leads back exactly to the integral constant of motion~\eqref{veryGeneralConstantAlongMotion}. This trivial $\psi$ of formula~\eqref{trivialPsi} however only works for that particular motion $t\mapsto q(t)$.

The total derivative condition becomes interesting when we manage to find one single function~$\psi$ that satisfies the equation~\eqref{totalDerivativeCondition} \emph{for all smooth paths} $q(t)$ at the same time, whether they solve Lagrange's equations or not. You cannot expect to find such a~$\psi$ for a random choice of~$L$ and~$q_\varepsilon$. In fact, by integrating formula~\eqref{totalDerivativeCondition} we get the following equality
\begin{equation}
  \label{integratedTotalDerivativeCondition}
  \int_{t_0}^{t_1}\Bigl(\frac{\partial}{\partial\varepsilon}
  L\bigl(s,q_\varepsilon(s),\dot q_\varepsilon(s)\bigr)
  \Bigr)\Big|_{\varepsilon=0}ds=
  \psi\bigl(t_1,q(t_1),\dot q(t_1)\bigr)-
  \psi\bigl(t_0,q(t_0),\dot q(t_0)\bigr)
  \quad\forall t\,,
\end{equation}
where the right-hand side only depends on the end values of $q(t),\dot q(t)$, while the left-hand side involves all values for $t\in[t_0,t_1]$ (see Examples~\ref{spaceShiftForOscillator} and~\ref{spaceDilationForOscillator} in Section~\ref{EnergyMomentumAngularMomentumSection}).

If we restrict our attention to the Lagrangian motions only, the following choice for~$\psi$ always works:
\begin{equation}\label{trivialPsi2}
  \psi(t,q,\dot q):=
  \partial_{\dot q}L(t,q,\dot q)\cdot\varphi(t,q,\dot q),
\end{equation}
as we can see by taking the derivative of the constant of motion~\eqref{veryGeneralConstantAlongMotion} with respect to~$t$. However, this leads to the first integral~\eqref{firstIntegralForTotalDerivative} being identically~0 (see again Examples~\ref{spaceShiftForOscillator} and~\ref{spaceDilationForOscillator} in Section~\ref{EnergyMomentumAngularMomentumSection}). In a few notable instances we will find a different, nontrivial~$\psi$ that works for all Lagrangian motions, using the Lagrange equation to make the right replacements in the formula for $\partial_\varepsilon L$.

\section{Time change}
\label{GaugeSpaceAndTimeSection}

Noether's original 1918 paper~\cite{NoetherOriginal} did not use the concept of what we call BH-function, but in its place had a change of independent variables. In the present context this means a change of time. The role of the time change is usually presented in terms of vector fields. Our purpose here is to bypass such geometric theory entirely and rephrase it as a byproduct of changes of variable in the integral Hamiltonian action, the one that is assumed stationary in the Calculus of Variations.

To our surprise we have found two distinct ways of changing the independent variable in the Hamiltonian action integral: the one we present in Subsection~\ref{invariancesWithSpaceAndTimeChangeAndGaugeSubsection} is simpler to describe and it leads to somewhat simpler formulas for the constants of motion; the other one in Subsection~\ref{alternativeApproachToTimeSubsection} is more convoluted, because its definition requires both the time change and its inverse, but it leads to the very same formulas for the first integrals as we get from the standard theory. The two approaches are in fact perfectly equivalent, as we show in Theorem~\ref{equivalenceOfAlternativeInvariance}, in the sense that any constant of motion that we obtain one way can be obtained also in the other, using a modified time change.

At the risk of annoying the reader that is familiar with the formulas arising from the vector field treatment, we have decided to give prominence to the new ``nonstandard'' version of time change in Subsections~\ref{invariancesWithSpaceAndTimeChangeAndGaugeSubsection} and~\ref{EquivalenceOfThreeVersionsSubsection} and in most examples starting from the next Section~\ref{EnergyMomentumAngularMomentumSection}. The more standard-looking approach will be described in Subsection~\ref{alternativeApproachToTimeSubsection}.

We can isolate a bare definition of time change that is shared among the two approaches:

\begin{Definition}\label{timeChangeDefinition}
A \emph{time change} will be a smooth real function $\tau(\varepsilon, t)$, defined for the same times $t$ as the motion~$q(t)$, and for $\varepsilon$ close to~0, with the compatibility condition $\tau(0,t)=t$ for all~$t$. The trivial time change is simply $\tau(\varepsilon,t)=t$ for all~$t$.
\end{Definition}

To help distinguish the two approaches, we will use the notation $\tau (\varepsilon,t)$ for the time change in the first approach, and use $\theta_\varepsilon(t)$ for the second. This choice is a bit less arbitrary than it may seem, because we will use the inverse~$\theta _\varepsilon^{-1}$, whereas we will find no need to inverte the function $t\mapsto \tau(\varepsilon,t)$.

\subsection{(BH-)invariances under space and time change: first approach}
\label{invariancesWithSpaceAndTimeChangeAndGaugeSubsection}

Our starting idea is to take integral of the Hamiltonian action not on the fixed interval $[a,b]$, but on the nonconstant interval $[\tau(\varepsilon,a), \tau( \varepsilon,b)]$.

\begin{Definition}\label{invarianceUnderSpaceAndTimeChangeDef}
We will say that there is infinitesimal \emph{invariance under space and time change} if
\begin{equation}\label{invarianceSpaceAndTimeAction}
  \frac{\partial
  A_{\tau(\varepsilon,a),\tau(\varepsilon,b)}(q_\varepsilon)}
  {\partial\varepsilon}\Big|_{\varepsilon=0}=0
  \qquad\forall a,b\,,
\end{equation}
or, in terms of the Lagrangian,
\begin{equation}\label{invarianceSpaceAndTimeLagrangian}
  \frac{\partial}{\partial\varepsilon}\Bigl(
  L\bigl(\tau(\varepsilon,t),
  q_\varepsilon(\tau(\varepsilon,t)),
  \dot q_\varepsilon(\tau(\varepsilon,t))\bigr)
  \partial_t\tau(\varepsilon,t)
  \Bigr)
  \Big|_{\varepsilon=0}=0
  \quad\forall t\,.
\end{equation}
\end{Definition}

To clarify why equations~\eqref{invarianceSpaceAndTimeAction} and~\eqref{invarianceSpaceAndTimeLagrangian} are equivalent, start with the action
\begin{equation}\label{actionOnVariableInterval}
  A_{\tau(\varepsilon,a),\tau(\varepsilon,b)}(q_\varepsilon)=
  \int_{\tau(\varepsilon,a)}^{\tau(\varepsilon,b)}
  L\bigl(\xi,q_\varepsilon(\xi),
  \dot q_\varepsilon(\xi)\bigr)\,d\xi
\end{equation}
and perform the change of variable $\xi=\tau(\varepsilon,t)$, keeping in mind the compatibility condition $\tau(0,t)=t$:
\begin{equation*}
  A_{\tau(\varepsilon,a),\tau(\varepsilon,b)}(q_\varepsilon)
  =\int_a^b
  L\bigl(\tau(\varepsilon,t),
  q_\varepsilon(\tau(\varepsilon,t)),
  \dot q_\varepsilon(\tau(\varepsilon,t))\bigr)
  \partial_t\tau(\varepsilon,t)\,dt\,.
\end{equation*}
If we take the derivative under the integral sign of this expression with respect to~$\varepsilon$ at $\varepsilon=0$, we see that it vanishes for all~$a,b$ if and only if equation~\eqref{invarianceSpaceAndTimeLagrangian} holds.

\begin{Theorem}[Noether-like theorem with space and time change]\label{originalNoetherWithSpaceAndTime}
Suppose that there is infinitesimal invariance under space and time change, and also that $t\mapsto q(t)$ is a solution to the Lagrange equation. Then the following function is constant:
\begin{equation}\label{constantAlongMotionForTimeChange}
  t\mapsto \partial_{\dot q}L\bigl(t,q(t),\dot q(t)\bigr)
  \cdot\partial_\varepsilon
  q_\varepsilon(t)|_{\varepsilon=0}
  +L\bigl(t,q(t),\dot q(t)\bigr)
  \partial_\varepsilon\tau(\varepsilon,t)|_{\varepsilon=0}\,.
\end{equation}
\end{Theorem}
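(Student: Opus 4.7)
The plan is to mimic the proof of Theorem~\ref{NoetherWithSpaceChangeAndGauge}, with the BH-function role now played by the boundary contributions produced by the variable endpoints $\tau(\varepsilon,a)$ and $\tau(\varepsilon,b)$. The whole argument reduces to computing
\[
  \frac{\partial}{\partial\varepsilon}
  A_{\tau(\varepsilon,a),\tau(\varepsilon,b)}(q_\varepsilon)
  \Big|_{\varepsilon=0}
\]
in two different ways and equating them.

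First I would apply the Leibniz rule for differentiation under the integral sign with moving endpoints, splitting the $\varepsilon$-dependence of $A_{\tau(\varepsilon,a),\tau(\varepsilon,b)}(q_\varepsilon)$ into three pieces: the lower endpoint, the upper endpoint, and the integrand (which depends on $\varepsilon$ only through $q_\varepsilon$). Using the compatibility condition $\tau(0,t)=t$, this gives
\begin{align*}
  \frac{\partial}{\partial\varepsilon}
  A_{\tau(\varepsilon,a),\tau(\varepsilon,b)}(q_\varepsilon)
  \Big|_{\varepsilon=0}
  ={}&\frac{\partial}{\partial\varepsilon}
  A_{a,b}(q_\varepsilon)\Big|_{\varepsilon=0}
  +L\bigl(b,q(b),\dot q(b)\bigr)
  \partial_\varepsilon\tau(\varepsilon,b)\big|_{\varepsilon=0}\\
  {}&-L\bigl(a,q(a),\dot q(a)\bigr)
  \partial_\varepsilon\tau(\varepsilon,a)\big|_{\varepsilon=0}\,.
\end{align*}

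Next I would invoke Lemma~\ref{lemmaOfIntegralFreeFormula}, which, since $q(t)$ solves the Lagrange equation, rewrites $\partial_\varepsilon A_{a,b}(q_\varepsilon)|_{\varepsilon=0}$ as $F(b)-F(a)$, where $F$ is the function defined in~\eqref{functionF}. Setting
\[
  H(t):=F(t)+L\bigl(t,q(t),\dot q(t)\bigr)
  \,\partial_\varepsilon\tau(\varepsilon,t)\big|_{\varepsilon=0}\,,
\]
the display above becomes $\partial_\varepsilon A_{\tau(\varepsilon,a),\tau(\varepsilon,b)}(q_\varepsilon)|_{\varepsilon=0}=H(b)-H(a)$. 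Finally, the invariance hypothesis~\eqref{invarianceSpaceAndTimeAction} forces $H(b)=H(a)$ for every $a,b$, i.e.\ $H$ is constant, which is exactly the asserted conserved quantity~\eqref{constantAlongMotionForTimeChange}.

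The only real obstacle is making sure the Leibniz-rule step is performed cleanly: one must observe that the $\varepsilon$-dependence of the integrand $L(\xi,q_\varepsilon(\xi),\dot q_\varepsilon(\xi))$ is independent of the $\varepsilon$-dependence of the endpoints, so the three contributions add without coupling terms, and then use $\tau(0,t)=t$ to evaluate the endpoint terms at the unperturbed trajectory. Everything else is a direct repetition of the pattern already established in the proofs of Theorems~\ref{simplestTheoremWithSpaceChangeOnly} and~\ref{NoetherWithSpaceChangeAndGauge}.
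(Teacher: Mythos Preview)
Your proof is correct and follows essentially the same computation as the paper: Leibniz differentiation with moving endpoints, the compatibility $\tau(0,t)=t$, Lemma~\ref{lemmaOfIntegralFreeFormula}, and collecting the $a$- and $b$-terms. The only organizational difference is that the paper packages this argument inside the proof of the more general Theorem~\ref{fullNoetherWithSpaceAndTimeAndGauge} (with a BH-function~$G$) and then obtains the present statement as the special case $G\equiv0$, whereas you prove it directly.
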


The proof of this Theorem will be a special case of the next Theorem~\ref{fullNoetherWithSpaceAndTimeAndGauge}.

While space change alone (Theorem~\ref{simplestTheoremWithSpaceChangeOnly}) was not powerful enough to deduce conservation of energy for autonomous Lagrangian systems, space and time change together are. Consider again a Lagrangian function $L$ that does not depend on~$t$, and take the same space change $q_\varepsilon(t):= q(t+\varepsilon)$ as before, introduce the time change
\begin{equation}\label{timeChangeForEnergy}
  \tau(\varepsilon,t) :=t- \varepsilon\,.
\end{equation}
It is trivial to verify that $A_{\tau(\varepsilon,a), \tau( \varepsilon, b)}(q_\varepsilon)$ does not depend on~$\varepsilon$. This is a neat instance of \emph{finite invariance with space and time change}. The induced first integral is the energy, again. Conservation of energy is the classical prototype of a conservation law that can be obtained in two different ways: (1)~with BH-function but no time change, and (2)~with time change but no BH-function. After Theorem~\ref{EquivalenceOfThreeVersionsTheorem} we will be able to exhibit a host of new examples.

It was Bessel-Hagen~\cite{BesselHagen} in 1921 who added the concept invariance up to a divergence (here called BH-invariance), on top of Noether's invariance under space and time change:

\begin{Definition}\label{gaugeInvarianceUnderSpaceAndTimeChangeDef}
We will say that there is infinitesimal \emph{BH-invariance under space and time change} if
\begin{equation}\label{gaugeInvarianceSpaceAndTimeAction}
  \frac{\partial}
  {\partial\varepsilon}
  \Bigl(A_{\tau(\varepsilon,a),
  \tau(\varepsilon,b)}(q_\varepsilon)
  +G(\varepsilon,b)-
  G(\varepsilon,a)\Bigr)
  \Big|_{\varepsilon=0}=0
\end{equation}
for all $a,b$, or, in terms of the Lagrangian,
\begin{equation}\label{gaugeInvarianceSpaceAndTimeLagrangian}
  \frac{\partial}{\partial\varepsilon}\Bigl(
  L\bigl(\tau(\varepsilon,t),
  q_\varepsilon(\tau(\varepsilon,t)),
  \dot q_\varepsilon(\tau(\varepsilon,t))\bigr)
  \partial_t\tau(\varepsilon,t)
  +\partial_tG(\varepsilon,t)
  \Bigr)
  \Big|_{\varepsilon=0}=0
  \quad\forall t\,.
\end{equation}
\end{Definition}

The equivalence of equations~\eqref{gaugeInvarianceSpaceAndTimeAction} and~\eqref{gaugeInvarianceSpaceAndTimeLagrangian} is left to the reader to check.

\begin{Theorem}[Full Noether-like theorem with space and time change and BH-function]\label{fullNoetherWithSpaceAndTimeAndGauge}
Suppose that there is infinitesimal BH-invariance under space and time change, and that also $t\mapsto q(t)$ is a solution to the Lagrange equation. Then the following function is constant in~$t$:
\begin{equation}
  \label{constantAlongMotionForSpaceTimeChangeAndGauge}
  \begin{split}
  N(t)={}& \partial_{\dot q}L\bigl(t,q(t),\dot q(t)\bigr)
  \cdot\partial_\varepsilon
  q_\varepsilon(t)|_{\varepsilon=0}+{}\\
  &{}+L\bigl(t,q(t),\dot q(t)\bigr)
  \partial_\varepsilon\tau(\varepsilon,t)|_{\varepsilon=0}
  +\partial_\varepsilon
  G(\varepsilon,t)
  |_{\varepsilon=0}\,.
  \end{split}
\end{equation}
\end{Theorem}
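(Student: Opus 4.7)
The strategy is to carry out the proof in close parallel with Theorem~\ref{NoetherWithSpaceChangeAndGauge}, the only new feature being an extra boundary contribution that arises because the endpoints of integration themselves depend on~$\varepsilon$. I would set
\[
\Phi(\varepsilon) := A_{\tau(\varepsilon,a),\tau(\varepsilon,b)}(q_\varepsilon) + G(\varepsilon,b) - G(\varepsilon,a),
\]
so that the BH-invariance hypothesis~\eqref{gaugeInvarianceSpaceAndTimeAction} reads exactly $\Phi'(0)=0$ for all $a,b$, and the claim of the theorem reduces to establishing
\[
\Phi'(0) = N(b) - N(a),
\]
with $N$ as in~\eqref{constantAlongMotionForSpaceTimeChangeAndGauge}. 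By arbitrariness of $a,b$, $N$ is then constant.

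To compute $\Phi'(0)$ I would start from the original integral form
\[
A_{\tau(\varepsilon,a),\tau(\varepsilon,b)}(q_\varepsilon) = \int_{\tau(\varepsilon,a)}^{\tau(\varepsilon,b)} L\bigl(\xi,q_\varepsilon(\xi),\dot q_\varepsilon(\xi)\bigr)\,d\xi
\]
and apply the Leibniz rule for an integral in which both the limits and the integrand depend on the parameter. Using the compatibility condition $\tau(0,t)=t$, the two boundary contributions at $\varepsilon=0$ are exactly $L\bigl(b,q(b),\dot q(b)\bigr)\,\partial_\varepsilon\tau(0,b)$ and $-L\bigl(a,q(a),\dot q(a)\bigr)\,\partial_\varepsilon\tau(0,a)$. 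The remaining ``interior'' contribution is $\partial_\varepsilon A_{a,b}(q_\varepsilon)|_{\varepsilon=0}$, i.e.\ the derivative of an integral on the \emph{fixed} interval $[a,b]$ with $\varepsilon$-dependent integrand. This is the only place where the assumption that $q(t)$ solves the Lagrange equation enters: Lemma~\ref{lemmaOfIntegralFreeFormula} rewrites this interior term as $F(b)-F(a)$, with $F(t)=\partial_{\dot q}L\bigl(t,q(t),\dot q(t)\bigr)\cdot\partial_\varepsilon q_\varepsilon(t)|_{\varepsilon=0}$. Adding the boundary pair $\partial_\varepsilon G(0,b)-\partial_\varepsilon G(0,a)$ coming from the BH-terms yields precisely $\Phi'(0)=N(b)-N(a)$, and combining with $\Phi'(0)=0$ finishes the argument. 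As a byproduct, specializing $G\equiv 0$ also proves Theorem~\ref{originalNoetherWithSpaceAndTime}.

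The only real technical nuisance I expect is the bookkeeping in the Leibniz rule when the limits and the integrand depend on $\varepsilon$ simultaneously; everything else is essentially copied from the proof of Theorem~\ref{NoetherWithSpaceChangeAndGauge}. A completely equivalent route that entirely sidesteps the variable-endpoint Leibniz computation would be to perform the change of variables $\xi=\tau(\varepsilon,t)$ already exhibited after Definition~\ref{invarianceUnderSpaceAndTimeChangeDef}, rewriting the action on $[\tau(\varepsilon,a),\tau(\varepsilon,b)]$ as an integral on the fixed interval $[a,b]$ with the modified integrand $L\bigl(\tau,q_\varepsilon\circ\tau,\dot q_\varepsilon\circ\tau\bigr)\,\partial_t\tau$, treat this as an effective Lagrangian, and apply Theorem~\ref{NoetherWithSpaceChangeAndGauge} directly, after checking that the $\varepsilon$-derivative of the new integrand at $\varepsilon=0$ generates the expected $L\,\partial_\varepsilon\tau|_{\varepsilon=0}$ term. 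Either route reduces the proof to Lemma~\ref{lemmaOfIntegralFreeFormula}.
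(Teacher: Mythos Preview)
Your proposal is correct and follows essentially the same argument as the paper: differentiate the action with variable endpoints via the Leibniz rule to split off the two $L\cdot\partial_\varepsilon\tau$ boundary terms, apply Lemma~\ref{lemmaOfIntegralFreeFormula} to the remaining fixed-interval piece, add the $\partial_\varepsilon G$ terms, and conclude $0=N(b)-N(a)$. The alternative change-of-variables route you sketch is exactly the device used in the paper to pass between formulas~\eqref{gaugeInvarianceSpaceAndTimeAction} and~\eqref{gaugeInvarianceSpaceAndTimeLagrangian}, so that too matches.
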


\begin{proof}
We can compute the derivative in formula~\eqref{gaugeInvarianceSpaceAndTimeAction} using differentiation under the integral sign and the compatibility condition $\tau(0,t)=t$:
\begin{align}
  0={}&\frac{\partial}
  {\partial\varepsilon}
  \Bigl(\int_{\tau(\varepsilon,a)}^{\tau(\varepsilon,b)}
  L\bigl(t,q_\varepsilon(t),\dot q_\varepsilon(t)
  \bigr)dt+G(\varepsilon,b)-
  G(\varepsilon,a)\Bigr)
  \bigg|_{\varepsilon=0}=\notag\\
  ={}&\frac{\partial}
  {\partial\varepsilon}
  \int_{a}^{b}L\bigl(t,q_\varepsilon(t),\dot q_\varepsilon(t)
  \bigr)dt+
  \label{derivativeOfIntegral}
  L\bigl(b,q(b),\dot q(b)\bigr)\partial_\varepsilon\tau(0,b)-{}\\
  &-L\bigl(a,q(a),\dot q(a)\bigr)\partial_\varepsilon\tau(0,a)
  +\partial_\varepsilon G(0,b)-
  \partial_\varepsilon G(0,a)\,.\notag
\end{align}
To the integral term~\eqref{derivativeOfIntegral} we can apply Lemma~\ref{lemmaOfIntegralFreeFormula}. Collecting the terms in~$b$ and in~$a$ we obtain $0=N(b)-N(a)$ for all~$a,b$. This means that $N$ is constant.
\end{proof}

We leave it to the reader to find an generalization of the Definition~\ref{definitionOfTotalDerivativeCondition} of the total derivative condition that is appropriate to Theorem~\ref{fullNoetherWithSpaceAndTimeAndGauge}, as is done by Desloge and Karch~\cite{DeslogeKarch}. In Section~\ref{PlaneWaveLikeExample} we will show a conservation law that is most simply treated with BH-invariance with both space and time change, all of them nontrivial.

\subsection{Equivalence of the infinitesimal invariances}
\label{EquivalenceOfThreeVersionsSubsection}

Infinitesimal BH-invariance under both space and time change clearly inglobates the other invariances, where either the gauge or the time change are absent (or trivial: $\tau(\varepsilon,t)\equiv t$, $G\equiv0$). After Bessel-Hagen's 1921 work~\cite{BesselHagen}, it apparently took until Boyer~\cite{Boyer} in 1967 to notice that, given a system whose first integrals can be deduced with space and time change and BH-function, those same integrals can be obtained with space change and a modified BH-function. Our contribution in the next Theorem~\ref{EquivalenceOfThreeVersionsTheorem} is that those first integrals can also be derived with space change and a modified time change, with no BH-function, at least in our ODE setting. In Sections~\ref{EnergyMomentumAngularMomentumSection} onward the only systems to which we cannot apply the Equivalence Theorem~\ref{EquivalenceOfThreeVersionsTheorem} are the few where space change is enough, as in Theorem~\ref{simplestTheoremWithSpaceChangeOnly}.

\begin{Theorem}[Equivalence of infinitesimal invariance conditions]
\label{EquivalenceOfThreeVersionsTheorem}
Let $q_\varepsilon(t)$ be a space change, $\tau(\varepsilon,t)$ a time change, $G(\varepsilon,t)$ a BH-function.
Define the additional BH-function $\mathcal{G}$ and time change~$\mathcal{T}$:
\begin{align}
  \mathcal{G}(\varepsilon,t):={}&\varepsilon\cdot L\bigl(t,
  q(t),\dot q(t)\bigr)
  \bigl(\partial_\epsilon
  \tau(\epsilon,t)|_{\epsilon=0}\bigr)\,,
  \label{auxiliaryGauge}\\
  \mathcal{T}(\varepsilon,t):={}&\varepsilon\cdot
  \frac{\partial_\epsilon G(\epsilon,t)|_{\epsilon=0}
  }%
  {L\bigl(t,q(t),
  \dot q(t)\bigr)}\,.
  \label{auxiliaryTime}
\end{align}
(If needed, we will restrict the time~$t$ to an interval where the denominator of~$\mathcal{T}$ does not vanish). Then the following three conditions are equivalent:
\begin{enumerate}
\item\label{onlyTimeChangeCondition}
the infinitesimal invariance with space and time change of formu\-la~\eqref{invarianceSpaceAndTimeLagrangian} holds with $\tau$ replaced by the time change $\tau+\mathcal{T}$, i.e.:
\begin{equation}
  \label{onlyTimeChangeConditionEquation}
  \frac{\partial}{\partial\varepsilon}\Bigl(
  L\bigl(\xi,
  q_\varepsilon(\xi),\dot q_\varepsilon(\xi)\bigr)
  \big|_{\xi=\tau(\varepsilon,t)+\mathcal{T}(\varepsilon,t)}
  \bigl(\partial_t\tau(\varepsilon,t)+
  \partial_t\mathcal{T}(\varepsilon,t)
  \bigr)
  \Bigr)\Big|_{\varepsilon=0}\equiv0\,.
\end{equation}

\item\label{onlyGaugeCondition}
the infinitesimal BH-invariance with space change of formula~\eqref{gaugeAndSpace} holds with $G$ replaced by~$G+ \mathcal{G}$, i.e.,
\begin{equation}\label{onlyGaugeConditionEquation}
  \frac{\partial}{\partial\varepsilon}\Bigl(
  L\bigl(t,
  q_\varepsilon(t),\dot q_\varepsilon(t)\bigr)
  +\partial_tG(\varepsilon,t)+
  \partial_t\mathcal{G}(\varepsilon,t)
  \Bigr)
  \Big|_{\varepsilon=0}\equiv0\,;
\end{equation}

\item\label{timeChangeAndGaugeCondition}
the infinitesimal BH-invariance with both space and time change of formula~\eqref{gaugeInvarianceSpaceAndTimeLagrangian} holds for the given time change~$\tau$ and BH-function~$G$, i.e.,
\begin{equation}\label{infinitesimalInvarianceWithTauAndGauge}
  \frac{\partial}{\partial\varepsilon}\Bigl(
  L\bigl(\xi,q_\varepsilon(\xi),\dot q_\varepsilon(\xi)\bigr)
  \Big|_{\xi=\tau(\varepsilon,t)}
  \partial_t\tau(\varepsilon,t)
  +\partial_tG(\varepsilon,t)
  \Bigr)
  \Big|_{\varepsilon=0}\equiv0\,.
\end{equation}
\end{enumerate}
\end{Theorem}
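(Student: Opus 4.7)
The plan is to reduce all three conditions to a single master identity, which makes the pairwise equivalence automatic. Introducing the shorthand $\alpha(t):=\partial_\varepsilon\tau(\varepsilon,t)|_{\varepsilon=0}$ and $\gamma(t):=\partial_\varepsilon G(\varepsilon,t)|_{\varepsilon=0}$, I claim that each of \eqref{onlyTimeChangeConditionEquation}, \eqref{onlyGaugeConditionEquation} and \eqref{infinitesimalInvarianceWithTauAndGauge} is equivalent to the single identity
\[
M(t):=\Bigl(\frac{\partial}{\partial\varepsilon}L\bigl(t,q_\varepsilon(t),\dot q_\varepsilon(t)\bigr)\Bigr)\Big|_{\varepsilon=0}+\frac{d}{dt}\bigl[L\bigl(t,q(t),\dot q(t)\bigr)\,\alpha(t)\bigr]+\gamma'(t)\equiv 0.
\]

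The one nontrivial computation, which I would record first as a preliminary lemma, is the chain-rule identity
\[
\Bigl(\frac{\partial}{\partial\varepsilon}\bigl[L\bigl(\tau(\varepsilon,t),q_\varepsilon(\tau(\varepsilon,t)),\dot q_\varepsilon(\tau(\varepsilon,t))\bigr)\,\partial_t\tau(\varepsilon,t)\bigr]\Bigr)\Big|_{\varepsilon=0}=\Bigl(\frac{\partial}{\partial\varepsilon}L\bigl(t,q_\varepsilon(t),\dot q_\varepsilon(t)\bigr)\Bigr)\Big|_{\varepsilon=0}+\frac{d}{dt}\bigl[L\bigl(t,q(t),\dot q(t)\bigr)\,\alpha(t)\bigr].
\]
To prove this, use $\tau(0,t)=t$ and $\partial_t\tau(0,t)\equiv 1$. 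The product rule on the left splits into (i)~the factor $\partial_t\tau$ differentiated, which contributes $L(t,q,\dot q)\,\alpha'(t)$ at $\varepsilon=0$, and (ii)~the composite Lagrangian differentiated. The $\varepsilon$-dependence in (ii) routed through $q_\varepsilon$ and $\dot q_\varepsilon$ amounts to $\partial_\varepsilon L(t,q_\varepsilon,\dot q_\varepsilon)|_{\varepsilon=0}$, while the $\varepsilon$-dependence routed through $\tau$ shifts all three slots of $L$ by the same amount $\alpha(t)$ at $\varepsilon=0$ and therefore contributes $\alpha(t)\,\frac{d}{dt}L\bigl(t,q(t),\dot q(t)\bigr)$. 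Assembling $L\alpha'+\alpha\,(dL/dt)=(L\alpha)'$ yields the identity.

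With this in hand, the three conditions unwind uniformly. Condition~\eqref{infinitesimalInvarianceWithTauAndGauge}: apply the identity and note $\partial_\varepsilon\partial_tG(\varepsilon,t)|_{\varepsilon=0}=\gamma'(t)$, giving exactly $M(t)$. Condition~\eqref{onlyGaugeConditionEquation}: no time change is present, but $\partial_\varepsilon\partial_t\mathcal{G}(\varepsilon,t)|_{\varepsilon=0}=(L\alpha)'(t)$ follows by direct differentiation of~\eqref{auxiliaryGauge}, so the left-hand side reads $\partial_\varepsilon L|_{\varepsilon=0}+\gamma'(t)+(L\alpha)'(t)=M(t)$. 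Condition~\eqref{onlyTimeChangeConditionEquation}: $\tau+\mathcal{T}$ is still a time change in the sense of Definition~\ref{timeChangeDefinition} because $\mathcal{T}(0,t)=0$, and its $\varepsilon$-derivative at $0$ is $\alpha(t)+\gamma(t)/L\bigl(t,q(t),\dot q(t)\bigr)$; applying the identity with this modified time change and trivial BH-function produces $\partial_\varepsilon L|_{\varepsilon=0}+(L\alpha+\gamma)'(t)=M(t)$.

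The main obstacle is just the bookkeeping of the chain-rule identity above; once it is in place, the proof is a matter of collecting terms. One should note that $\mathcal{T}$ is only well-defined where $L(t,q(t),\dot q(t))\ne 0$, as the statement already warns, and this restriction is inherited by the equivalence.
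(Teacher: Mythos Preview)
Your proof is correct and follows essentially the same approach as the paper's own proof: show that the left-hand sides of the three conditions are all equal to a single common expression, so that the three conditions are trivially equivalent. Your master quantity $M(t)$ coincides with the paper's common value $\mathcal{L}(0,t)\partial^2_{\varepsilon,t}\tau(0,t)+\partial_t\mathcal{L}(0,t)\partial_\varepsilon\tau(0,t)+\partial_\varepsilon\mathcal{L}(0,t)+\partial^2_{\varepsilon,t}G(0,t)$ once one expands $(L\alpha)'=L\alpha'+\alpha\,dL/dt$; the only difference is that you isolate the chain-rule computation as an explicit lemma, whereas the paper just calls it a ``straightforward brute-force computation''.
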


\begin{proof}
The left-hand sides of equations~\eqref{onlyTimeChangeConditionEquation}, \eqref{onlyGaugeConditionEquation} and~\eqref{infinitesimalInvarianceWithTauAndGauge} are identically the same. In terms of $\mathcal{L}(\varepsilon,\xi):=L(\xi, q_\varepsilon (\xi), \dot q_\varepsilon(\xi))$, their common value can be expanded out as:
\begin{equation*}
  \mathcal{L}(0,t)\partial^2_{\varepsilon,t}\tau(0,t)+
  \partial_t\mathcal{L}(0,t)\partial_\varepsilon\tau(0,t)
  +\partial_\varepsilon\mathcal{L}(0,t)+
  \partial^2_{\varepsilon,t}G(0,t)\,.
\end{equation*}
This is a straightforward brute-force computation using basic two-variable chain rule calculus, with a little care due to nesting, and using the simplification rules $\tau(0,t)\equiv t$, $\partial_t \tau(0,t)\equiv 1$.
\end{proof}

\begin{Observation}
One can check that the value of the constant of motion in equation~\eqref{constantAlongMotionForSpaceTimeChangeAndGauge} does not change if we perform either the replacements $\tau\to \tau+ \mathcal{T}$, $G\to0$ of Condition~\ref{onlyTimeChangeCondition}, or the replacements $\tau\to t$, $G\to H+\mathcal{G}$ of Condition~\ref{onlyGaugeCondition}.
\end{Observation}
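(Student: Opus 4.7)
The plan is to verify both invariance-preserving claims by direct substitution into formula~\eqref{constantAlongMotionForSpaceTimeChangeAndGauge}, using only the explicit definitions~\eqref{auxiliaryGauge} and~\eqref{auxiliaryTime} of $\mathcal{G}$ and $\mathcal{T}$ and the basic observation that both are linear in $\varepsilon$ at leading order. Since the constant of motion involves only first $\varepsilon$-derivatives of the ingredients at $\varepsilon=0$, no further structure of $\tau$ or $G$ is needed.

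For Condition~\ref{onlyTimeChangeCondition}, I would write down the constant of motion with $\tau$ replaced by $\tau+\mathcal{T}$ and $G$ replaced by $0$, which gives
\begin{equation*}
  \partial_{\dot q}L\bigl(t,q(t),\dot q(t)\bigr)\cdot
  \partial_\varepsilon q_\varepsilon(t)|_{\varepsilon=0}
  +L\bigl(t,q(t),\dot q(t)\bigr)
  \bigl(\partial_\varepsilon\tau(\varepsilon,t)
  +\partial_\varepsilon\mathcal{T}(\varepsilon,t)\bigr)
  \big|_{\varepsilon=0}\,.
\end{equation*}
From~\eqref{auxiliaryTime}, the factor $\partial_\varepsilon\mathcal{T}(\varepsilon,t)|_{\varepsilon=0}$ equals $\partial_\varepsilon G(\varepsilon,t)|_{\varepsilon=0}/L(t,q(t),\dot q(t))$, so multiplying by $L$ recovers exactly the BH-function term $\partial_\varepsilon G(\varepsilon,t)|_{\varepsilon=0}$ of the original expression~\eqref{constantAlongMotionForSpaceTimeChangeAndGauge}. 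Hence the two values agree.

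For Condition~\ref{onlyGaugeCondition}, I would do the mirror image: after the replacements $\tau\to t$ and $G\to G+\mathcal{G}$ the time-change term vanishes (because $\partial_\varepsilon t\equiv 0$) and the BH-term becomes $\partial_\varepsilon G(\varepsilon,t)|_{\varepsilon=0}+\partial_\varepsilon\mathcal{G}(\varepsilon,t)|_{\varepsilon=0}$. By~\eqref{auxiliaryGauge}, $\partial_\varepsilon\mathcal{G}(\varepsilon,t)|_{\varepsilon=0}=L(t,q(t),\dot q(t))\,\partial_\varepsilon\tau(\varepsilon,t)|_{\varepsilon=0}$, which is precisely the time-change contribution of the original $N(t)$. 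So again nothing changes.

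There is essentially no obstacle beyond bookkeeping: the only subtlety is that $\mathcal{T}$ is only defined where $L(t,q(t),\dot q(t))\ne 0$, which is already noted in the statement of the theorem, and that both $\mathcal{G}$ and $\mathcal{T}$ vanish at $\varepsilon=0$ so the compatibility conditions ($\tau(0,t)=t$ in the modified schemes, and the trivial BH-function when required) remain intact. In particular no use of the Lagrange equation is needed for this observation, since it is a purely algebraic identity between the three formulas for the constant of motion evaluated at~$\varepsilon=0$.
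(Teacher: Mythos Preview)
Your proposal is correct and is precisely the direct verification the paper leaves to the reader: the Observation is stated without proof in the paper, and your substitution argument using the definitions~\eqref{auxiliaryGauge} and~\eqref{auxiliaryTime} is exactly the intended check. (Incidentally, the ``$H$'' in the paper's statement is a typo for~$G$, which you have silently and correctly read as~$G+\mathcal{G}$.)
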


\begin{Observation}
We may feel uneasy that formula~\eqref{auxiliaryTime} for $\mathcal{T}$ contains the Lagrangian~$L$ at the denominator. What happens if the Lagrangian vanishes for some values of~$t$? Are those values of any intrinsic importance in the trivialized BH-function approach? Luckily the answer is negative, thanks to this simple trick: choose a constant~$k$ so that $L(t,q_0(t),\dot q_0(t))+k$ does not vanish in a compact interval we are interested~in, and define the modified functions:
\begin{equation*}
  L_k=L+k\,,\quad
  \mathcal{T}_k(\varepsilon,t):=\varepsilon
  \frac{\partial_\varepsilon
  G(0,t)-k\partial_\varepsilon\tau(0,t)}{L\bigl(t,q_0(t),
  \dot q_0(t)\bigr)+k}\,.
\end{equation*}
Then we may substitute the following Condition~4 for Condition~\ref{onlyTimeChangeCondition} in Theorem~\ref{EquivalenceOfThreeVersionsTheorem}:
\begin{itemize}
\item[4.]\label{modifiedOnlyTimeChangeCondition}
the infinitesimal BH-invariance with time change of formula~\eqref{invarianceSpaceAndTimeLagrangian} holds with $L$ replaced by~$L_k$, the time change $\tau$ replaced by~$\tau+ \mathcal{T}_k$, i.e.:
\begin{equation*}
  \frac{\partial}{\partial\varepsilon}\Bigl(
  L_k\bigl(\xi,
  q_\varepsilon(\xi),\dot q_\varepsilon(\xi)\bigr)
  \big|_{\xi=\tau(\varepsilon,t)+\mathcal{T}_k(\varepsilon,t)}
  \bigl(\partial_t\tau(\varepsilon,t)+
  \partial_t\mathcal{T}_k(\varepsilon,t)
  \bigr)
  \Bigr)\Big|_{\varepsilon=0}\equiv0\,.
\end{equation*}
\end{itemize}
Of course the Lagrangians $L$ and $L_k$ have the same Lagrange equations. Also, the value of the constant of motion in equation~\eqref{constantAlongMotionForTimeChange} does not change if $L$ is replaced by~$L_k$, and $\tau$ by~$\tau+ \mathcal{T}_k$.
\end{Observation}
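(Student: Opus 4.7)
The plan is to piggyback on the chain-rule expansion that drove the proof of Theorem~\ref{EquivalenceOfThreeVersionsTheorem}. The Observation bundles three assertions: that $L$ and $L_k$ have the same Euler--Lagrange equations, that Condition~4 can stand in for Condition~1 in the equivalence theorem, and that the constant of motion~\eqref{constantAlongMotionForTimeChange} is unchanged when $L$ and $\tau$ are replaced by $L_k$ and $\tau+\mathcal{T}_k$. I would dispatch them in this order, from easiest to hardest.

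The first assertion is immediate: since $L_k-L=k$ is a constant, both $\partial_qL_k=\partial_qL$ and $\partial_{\dot q}L_k=\partial_{\dot q}L$, so the Euler--Lagrange operator is literally unchanged.

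For the second assertion, I would show directly that Condition~4 is equivalent to Condition~3 (and hence to all three original conditions). The definition of $\mathcal{T}_k$ is linear in $\varepsilon$, so $\mathcal{T}_k(0,t)\equiv 0$ and $\partial_t\mathcal{T}_k(0,t)\equiv 0$; thus the compatibility rules $\tau(0,t)\equiv t$ and $\partial_t\tau(0,t)\equiv 1$ survive the replacement $\tau\mapsto \tau+\mathcal{T}_k$, and the brute-force expansion from the proof of Theorem~\ref{EquivalenceOfThreeVersionsTheorem} applies verbatim. Writing $\mathcal{L}_k(\varepsilon,\xi):=L_k(\xi,q_\varepsilon(\xi),\dot q_\varepsilon(\xi))$, the left-hand side of Condition~4 unfolds to
\[
(L+k)\,\partial^2_{\varepsilon,t}(\tau+\mathcal{T}_k)(0,t)
+\partial_t\mathcal{L}(0,t)\,\partial_\varepsilon(\tau+\mathcal{T}_k)(0,t)
+\partial_\varepsilon\mathcal{L}(0,t)\,.
\]
The decisive identity is the defining relation $(L+k)\,\partial_\varepsilon\mathcal{T}_k(0,t)=\partial_\varepsilon G(0,t)-k\,\partial_\varepsilon\tau(0,t)$; differentiating it in~$t$ and invoking equality of mixed partials causes all the $k$-contributions to cancel, and the expansion collapses onto exactly the expansion of Condition~3, namely
\[
L\,\partial^2_{\varepsilon,t}\tau(0,t)+\partial_t\mathcal{L}(0,t)\,\partial_\varepsilon\tau(0,t)+\partial_\varepsilon\mathcal{L}(0,t)+\partial^2_{\varepsilon,t}G(0,t)\,.
\]

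For the third assertion, the first summand of~\eqref{constantAlongMotionForTimeChange} is unaltered because $\partial_{\dot q}L_k=\partial_{\dot q}L$; the second summand becomes $(L+k)\bigl(\partial_\varepsilon\tau+\partial_\varepsilon\mathcal{T}_k\bigr)\big|_{\varepsilon=0}$, and substituting the defining relation for $\mathcal{T}_k$ once more causes the $k\,\partial_\varepsilon\tau$ contributions to telescope, leaving $L\,\partial_\varepsilon\tau|_{\varepsilon=0}+\partial_\varepsilon G|_{\varepsilon=0}$. Reinstating the first summand recovers exactly formula~\eqref{constantAlongMotionForSpaceTimeChangeAndGauge}, as claimed. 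The only nontrivial bookkeeping, and the one place I expect to slow down, is the mixed-partials juggling in the equivalence step; everything else is a direct substitution.
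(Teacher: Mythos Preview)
Your proof is correct, and in fact the paper does not give a proof of this Observation at all: it is stated as a remark and left to the reader. Your argument is precisely the natural one, reusing the brute-force chain-rule expansion from the proof of Theorem~\ref{EquivalenceOfThreeVersionsTheorem} with $L$ replaced by $L_k$ and $\tau$ replaced by $\tau+\mathcal{T}_k$, and checking that the extra $k$-terms cancel against the $-k\,\partial_\varepsilon\tau(0,t)$ built into the numerator of~$\mathcal{T}_k$.

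One small point worth tightening: when you say ``recovers exactly formula~\eqref{constantAlongMotionForSpaceTimeChangeAndGauge}'', note that the Observation literally refers to formula~\eqref{constantAlongMotionForTimeChange}, which is the no-BH-function version. What you have actually shown (correctly) is that the Condition~4 constant of motion, computed via~\eqref{constantAlongMotionForTimeChange} applied to $L_k$ and $\tau+\mathcal{T}_k$, equals~\eqref{constantAlongMotionForSpaceTimeChangeAndGauge}; by the preceding Observation this in turn equals the Condition~1 constant of motion computed via~\eqref{constantAlongMotionForTimeChange} applied to $L$ and $\tau+\mathcal{T}$. So ``does not change'' is to be read as ``agrees with the common value shared by all of Conditions~1--3'', and you have established exactly that. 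It would be cleaner to say so explicitly rather than leaving the reader to reconcile the two equation labels.
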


\subsection{A more standard approach to time change}
\label{alternativeApproachToTimeSubsection}

Invariance under space and time change can be introduced in a different way. Let us take a time change that we write as $\theta_\varepsilon(t)$, so that we have a handy notation for its inverse $\theta_\varepsilon^{-1}$ with respect to~$t$. Let $\tilde q_\varepsilon$ be the composition of $t\mapsto q_\varepsilon(t)$ with $\theta_\varepsilon^{-1}$:
\begin{equation}
  \tilde q_\varepsilon(\xi):=q_\varepsilon\circ
  \theta_\varepsilon^{-1}(\xi)\,,\qquad
  \dot{\tilde q}_\varepsilon(\xi)=
  \frac{\dot q_\varepsilon\circ\theta_\varepsilon^{-1}(\xi)}%
  {\theta_\varepsilon'\circ\theta_\varepsilon^{-1}(\xi)}\,.
\end{equation}
Compared to Definitions~\ref{invarianceUnderSpaceAndTimeChangeDef} and~\ref{gaugeInvarianceUnderSpaceAndTimeChangeDef}, the idea here is to take the action with variable endpoints but on the reparameterized $\tilde q_\varepsilon$, instead of the original $q_\varepsilon$.

\begin{Definition}
\label{alternativeGaugeInvarianceUnderSpaceAndTimeChangeDef}
We will say that there is infinitesimal \emph{alternative BH-invariance under space and time change} if
\begin{equation}\label{alternativeGaugeInvarianceSpaceAndTimeAction}
  \frac{\partial}
  {\partial\varepsilon}
  \Bigl(A_{\theta_\varepsilon(a),\theta_\varepsilon(b)}
  (\tilde q_\varepsilon)
  +G(\varepsilon,b)-
  G(\varepsilon,a)\Bigr)
  \Big|_{\varepsilon=0}=0
\end{equation}
for all~$a,b$, or, in terms of the Lagrangian,
\begin{equation}
  \label{alternativeGaugeInvarianceSpaceAndTimeLagrangian}
  \frac{\partial}{\partial\varepsilon}\biggl(
  L\Bigl(\theta_\varepsilon(t),q_\varepsilon(t),
  \frac{\dot q_\varepsilon(t)}{\theta'_\varepsilon(t)}
  \Bigr)\theta'_\varepsilon(t)+
  \partial_t G(\varepsilon,t)
  \biggr)\bigg|_{\varepsilon=0}=0
  \quad\forall t\,.
\end{equation}
\end{Definition}

Formula~\eqref{alternativeGaugeInvarianceSpaceAndTimeLagrangian} can be deduced from~\eqref{alternativeGaugeInvarianceSpaceAndTimeAction} using the change of variable $\xi=\theta_\varepsilon(t)$ to get an integral with fixed endpoints:
\begin{equation*}
  \begin{split}
  A_{\theta_\varepsilon(a),\theta_\varepsilon(b)}
  (\tilde q_\varepsilon)={}&
  \int_{\theta_\varepsilon(a)}^{\theta_\varepsilon(b)}
  L\bigl(\xi,\tilde q_\varepsilon(\xi),
  \dot{\tilde q}_\varepsilon(\xi)\bigr)d\xi=\\
  ={}&
  \int_a^bL\Bigl(\theta_\varepsilon(t),q_\varepsilon(t),
  \frac{\dot q_\varepsilon(t)}{\theta'_\varepsilon(t)}
  \Bigr)\theta'_\varepsilon(t)dt\,.
  \end{split}
\end{equation*}

Condition~\eqref{alternativeGaugeInvarianceSpaceAndTimeLagrangian} is different from~\eqref{gaugeInvarianceSpaceAndTimeLagrangian}. 
The two approaches to invariance can be reconciled with suitable modification of the space changes.

\begin{Theorem}\label{equivalenceOfAlternativeInvariance}
Suppose that a triple of space and time change and BH-function $q_\varepsilon (t), \tau(\varepsilon, t),G(\varepsilon,t)$ satisfies the invariance of Definition~\ref{gaugeInvarianceUnderSpaceAndTimeChangeDef}. Define the new space change~$Q_{1,\varepsilon}(t)$ in either of the following two ways:
\begin{align}
  Q_{1,\varepsilon}(t):={}&q_\varepsilon\bigl(t+
  \varepsilon\cdot(\partial_\epsilon
  \tau(\epsilon,t)|_{\epsilon=0})\bigr)\,,
  \label{fromStandardToAlternative1}\\
  Q_{1,\varepsilon}(t):={}&
  q_\varepsilon(t)+\varepsilon\cdot(\partial_\epsilon
  \tau(\epsilon,t)|_{\epsilon=0})\dot q(t)
  \label{fromStandardToAlternative2}
\end{align}
and simply set $\theta_\varepsilon(t):=\tau(\varepsilon,t)$. Then the triple $Q_{1,\varepsilon}(t),\theta_\varepsilon(t),G(\varepsilon,t)$ satisfies the alternative invariance of Definition~\ref{alternativeGaugeInvarianceUnderSpaceAndTimeChangeDef}.

Conversely, suppose that a triple of space and time change and BH-function $q_\varepsilon (t), \theta_\varepsilon(t), G(\varepsilon, t)$ satisfies the alternative invariance of Definition~\ref{alternativeGaugeInvarianceUnderSpaceAndTimeChangeDef}. Define the new space change~$Q_{2,\varepsilon}(t)$ in either of the following two ways:
\begin{align}
  Q_{2,\varepsilon}(t):={}&q_\varepsilon\bigl(t-
  \varepsilon\cdot(\partial_\epsilon
  \theta_\varepsilon(t)|_{\epsilon=0})\bigr)\,,
  \label{fromAlternativeToStandard1}\\
  Q_{2,\varepsilon}(t):={}&
  q_\varepsilon(t)-\varepsilon\cdot(\partial_\epsilon
  \theta_\varepsilon(t)|_{\epsilon=0})\dot q(t)
  \label{fromAlternativeToStandard2}
\end{align}
and simply set $\tau(\varepsilon,t):=\theta_\varepsilon(t)$. Then the triple $Q_{2,\varepsilon}(t),\tau(\varepsilon,t),G(\varepsilon,t)$ satisfies the invariance of Definition~\ref{gaugeInvarianceUnderSpaceAndTimeChangeDef}.
\end{Theorem}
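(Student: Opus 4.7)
The plan is to expand both sides of the two invariance conditions~\eqref{gaugeInvarianceSpaceAndTimeLagrangian} and~\eqref{alternativeGaugeInvarianceSpaceAndTimeLagrangian} at $\varepsilon=0$ via the chain rule, and then compare. Writing $\delta\tau(t):=\partial_\varepsilon\tau(\varepsilon,t)|_{\varepsilon=0}$ and $\delta q(t):=\partial_\varepsilon q_\varepsilon(t)|_{\varepsilon=0}$, and recalling $\tau(0,t)=t$ and $\partial_t\tau(0,t)=1$, the standard expansion rests on the identities $\partial_\varepsilon[q_\varepsilon(\tau(\varepsilon,t))]|_{\varepsilon=0}=\delta q(t)+\dot q(t)\,\delta\tau(t)$ and $\partial_\varepsilon[\dot q_\varepsilon(\tau(\varepsilon,t))]|_{\varepsilon=0}=\tfrac{d}{dt}\delta q(t)+\ddot q(t)\,\delta\tau(t)$. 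Regrouping with $\partial_tL+\partial_qL\cdot\dot q+\partial_{\dot q}L\cdot\ddot q=dL/dt$ along $q(t)$ reduces the left-hand side of~\eqref{gaugeInvarianceSpaceAndTimeLagrangian} to
\begin{equation*}
\frac{d}{dt}\bigl(L\,\delta\tau\bigr)+\partial_qL\cdot\delta q+\partial_{\dot q}L\cdot\frac{d(\delta q)}{dt}+\partial_t\partial_\varepsilon G(0,t).
\end{equation*}

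Next I repeat the expansion for the alternative invariance~\eqref{alternativeGaugeInvarianceSpaceAndTimeLagrangian}, with $Q_\varepsilon$ and $\theta_\varepsilon$ in the roles of $q_\varepsilon$ and~$\tau$. The only new ingredient is $\partial_\varepsilon[\dot q_\varepsilon(t)/\theta'_\varepsilon(t)]|_{\varepsilon=0}=\tfrac{d}{dt}\delta Q(t)-\dot q(t)\,\tfrac{d}{dt}\delta\theta(t)$, and a direct calculation turns the left-hand side of~\eqref{alternativeGaugeInvarianceSpaceAndTimeLagrangian} into
\begin{equation*}
\partial_tL\cdot\delta\theta+\partial_qL\cdot\delta Q+\partial_{\dot q}L\cdot\Bigl(\frac{d(\delta Q)}{dt}-\dot q\,\frac{d(\delta\theta)}{dt}\Bigr)+L\,\frac{d(\delta\theta)}{dt}+\partial_t\partial_\varepsilon G(0,t).
\end{equation*}
Notice that the BH-function contribution $\partial_t\partial_\varepsilon G(0,t)$ is literally identical in the two expressions, which is why $G$ is left unchanged by the transformation.

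Setting $\theta_\varepsilon=\tau$ so that $\delta\theta=\delta\tau$, the difference of the two expressions collapses to
\begin{equation*}
\partial_qL\cdot\bigl[(\delta q+\dot q\,\delta\tau)-\delta Q\bigr]+\partial_{\dot q}L\cdot\frac{d}{dt}\bigl[(\delta q+\dot q\,\delta\tau)-\delta Q\bigr],
\end{equation*}
so the two invariance conditions are equivalent whenever $\delta Q=\delta q+\dot q\,\delta\tau$. A one-line chain-rule check shows that each of the two definitions~\eqref{fromStandardToAlternative1} and~\eqref{fromStandardToAlternative2} of $Q_{1,\varepsilon}$ yields exactly $\partial_\varepsilon Q_{1,\varepsilon}(t)|_{\varepsilon=0}=\delta q(t)+\dot q(t)\,\delta\tau(t)$, which settles the forward implication. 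The converse is perfectly symmetric: the same computation with the roles swapped demands $\partial_\varepsilon Q_{2,\varepsilon}(t)|_{\varepsilon=0}=\delta q(t)-\dot q(t)\,\delta\theta(t)$, and both~\eqref{fromAlternativeToStandard1} and~\eqref{fromAlternativeToStandard2} deliver that identity.

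The main obstacle is the chain-rule bookkeeping inside the nested composition $q_\varepsilon(\tau(\varepsilon,t))$ of~\eqref{gaugeInvarianceSpaceAndTimeLagrangian}: it is easy to overlook the $\ddot q(t)\,\delta\tau(t)$ summand that the $\dot q_\varepsilon$-slot contributes to $\partial_\varepsilon L|_{\varepsilon=0}$, and this summand is precisely what distinguishes the factor $(dL/dt)\,\delta\tau$ of the standard expansion from the factor $\partial_tL\cdot\delta\theta$ of the alternative one. The drift correction $\dot q\,\delta\tau$ built into formulas~\eqref{fromStandardToAlternative1}--\eqref{fromStandardToAlternative2} is exactly what is needed to absorb that mismatch.
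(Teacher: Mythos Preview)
Your proof is correct. You work with the pointwise Lagrangian conditions~\eqref{gaugeInvarianceSpaceAndTimeLagrangian} and~\eqref{alternativeGaugeInvarianceSpaceAndTimeLagrangian}, expand both at $\varepsilon=0$, and show that their difference vanishes precisely when $\delta Q=\delta q\pm\dot q\,\delta\tau$; the paper instead phrases the same fact in the integral (action) form, simply asserting that $\partial_\varepsilon A_{\tau(\varepsilon,a),\tau(\varepsilon,b)}(q_\varepsilon)|_{\varepsilon=0}= \partial_\varepsilon A_{\theta_\varepsilon(a),\theta_\varepsilon(b)}(\tilde Q_{1,\varepsilon})|_{\varepsilon=0}$ and its companion, and leaves the chain-rule verification to the reader. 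Since the paper itself establishes that the action and Lagrangian formulations of each invariance are equivalent, your argument is the same computation made explicit at the level of the integrand; what you gain is a transparent identification of exactly which term (the $\ddot q\,\delta\tau$ contribution) forces the drift correction, which the paper's one-line proof hides.
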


\begin{proof}
Simply notice that
\begin{gather*}
  \frac{\partial}{\partial\varepsilon}
  A_{\tau(\varepsilon,a),\tau(\varepsilon,b)}(q_\varepsilon)
  \Big|_{\varepsilon=0}=
  \frac{\partial}{\partial\varepsilon}
  A_{\theta_\varepsilon(a),\theta_\varepsilon(b)}
  (\tilde Q_{1,\varepsilon})
  \Big|_{\varepsilon=0}\,,\\
  \frac{\partial}{\partial\varepsilon}
  A_{\theta_\varepsilon(a),\theta_\varepsilon(b)}
  (\tilde q_\varepsilon)
  \Big|_{\varepsilon=0}=
  \frac{\partial}{\partial\varepsilon}
  A_{\tau(\varepsilon,a),\tau(\varepsilon,b)}(Q_{2,\varepsilon})
  \Big|_{\varepsilon=0}\,,
\end{gather*}
where $\tilde Q_{1,\varepsilon}:= Q_{1,\varepsilon}\circ \theta_ \varepsilon ^{-1}$, and $\tilde q_\varepsilon:=q_\varepsilon\circ \theta_ \varepsilon^{-1}$.
\end{proof}

\begin{Theorem}[Full Noether's theorem with space and time change and BH-function]
\label{alternativeFullNoetherWithSpaceAndTimeAndGauge}
Suppose that there is infinitesimal alternative BH-invariance under space and time change according to Definition~\ref{alternativeGaugeInvarianceUnderSpaceAndTimeChangeDef}, and that also $t\mapsto q(t)$ is a solution to the Lagrange equation. Then the following function is constant in~$t$:
\begin{equation}
  \label{constantAlongMotionForAlternativeSpaceTimeChangeAndGauge}
  \begin{split}
  N(t)={}& \partial_{\dot q}L\bigl(t,q(t),\dot q(t)\bigr)\cdot
  \Bigl(\partial_\varepsilon q_\varepsilon(t)|_{\varepsilon=0}-
  (\partial_\varepsilon\theta_\varepsilon(t)|_{\varepsilon=0})
  \dot q(t)\Bigr)+{}\\
  &+L\bigl(t,q(t),\dot q(t)\bigr)
  \partial_\varepsilon\theta_\varepsilon(t)|_{\varepsilon=0}+
  \partial_\varepsilon
  G(\varepsilon,t)
  |_{\varepsilon=0}\,.
  \end{split}
\end{equation}
\end{Theorem}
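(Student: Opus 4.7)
The plan is to mirror the proof of Theorem~\ref{fullNoetherWithSpaceAndTimeAndGauge}, but with the alternative action $A_{\theta_\varepsilon(a),\theta_\varepsilon(b)}(\tilde q_\varepsilon)$ written as an integral with \emph{variable} endpoints on the reparametrized curves $\tilde q_\varepsilon=q_\varepsilon\circ\theta_\varepsilon^{-1}$. (A quicker route is to apply Theorem~\ref{equivalenceOfAlternativeInvariance}: the triple $(Q_{2,\varepsilon},\tau,G)$ defined by~\eqref{fromAlternativeToStandard2} and $\tau:=\theta_\varepsilon$ satisfies the invariance of Definition~\ref{gaugeInvarianceUnderSpaceAndTimeChangeDef}, and Theorem~\ref{fullNoetherWithSpaceAndTimeAndGauge} then produces exactly the formula~\eqref{constantAlongMotionForAlternativeSpaceTimeChangeAndGauge} once one computes $\partial_\varepsilon Q_{2,\varepsilon}(t)|_{\varepsilon=0}=\partial_\varepsilon q_\varepsilon(t)|_{\varepsilon=0}-(\partial_\varepsilon\theta_\varepsilon(t)|_{\varepsilon=0})\dot q(t)$. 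The direct route, however, is more instructive.)

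First I would differentiate the action
\[
A_{\theta_\varepsilon(a),\theta_\varepsilon(b)}(\tilde q_\varepsilon)
=\int_{\theta_\varepsilon(a)}^{\theta_\varepsilon(b)}
L\bigl(\xi,\tilde q_\varepsilon(\xi),
\dot{\tilde q}_\varepsilon(\xi)\bigr)\,d\xi
\]
with respect to $\varepsilon$ at $\varepsilon=0$, using the Leibniz rule for integrals with variable limits. Since $\theta_0(t)=t$ and $\tilde q_0=q$, the variable-limit contribution is $L(b,q(b),\dot q(b))\,\partial_\varepsilon\theta_\varepsilon(b)|_{\varepsilon=0}-L(a,q(a),\dot q(a))\,\partial_\varepsilon\theta_\varepsilon(a)|_{\varepsilon=0}$. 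The remaining interior term is a derivative of the action over the \emph{fixed} interval $[a,b]$ along the family $\tilde q_\varepsilon$, and since $\tilde q_0=q$ solves the Lagrange equation, Lemma~\ref{lemmaOfIntegralFreeFormula} applies and yields $[\partial_{\dot q}L(t,q(t),\dot q(t))\cdot\partial_\varepsilon\tilde q_\varepsilon(t)|_{\varepsilon=0}]_{t=a}^{t=b}$.

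The key auxiliary calculation is then $\partial_\varepsilon\tilde q_\varepsilon(\xi)|_{\varepsilon=0}$: differentiating $\tilde q_\varepsilon=q_\varepsilon\circ\theta_\varepsilon^{-1}$ via the chain rule, together with the elementary identity $\partial_\varepsilon\theta_\varepsilon^{-1}(\xi)|_{\varepsilon=0}=-\partial_\varepsilon\theta_\varepsilon(\xi)|_{\varepsilon=0}$ (from $\theta_\varepsilon(\theta_\varepsilon^{-1}(\xi))\equiv\xi$ and $\theta_0=\mathrm{id}$), gives
\[
\partial_\varepsilon\tilde q_\varepsilon(\xi)\big|_{\varepsilon=0}
=\partial_\varepsilon q_\varepsilon(\xi)\big|_{\varepsilon=0}
-(\partial_\varepsilon\theta_\varepsilon(\xi)|_{\varepsilon=0})\,\dot q(\xi).
\]
This is the main bookkeeping obstacle; once it is in hand, the rest is collecting terms.

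Substituting these back, the derivative of $A_{\theta_\varepsilon(a),\theta_\varepsilon(b)}(\tilde q_\varepsilon)$ at $\varepsilon=0$ equals exactly $\bigl[N(t)-\partial_\varepsilon G(\varepsilon,t)|_{\varepsilon=0}\bigr]_{t=a}^{t=b}$, with $N(t)$ as defined in~\eqref{constantAlongMotionForAlternativeSpaceTimeChangeAndGauge}. The alternative BH-invariance~\eqref{alternativeGaugeInvarianceSpaceAndTimeAction} then asserts that the sum with $\partial_\varepsilon G(0,b)-\partial_\varepsilon G(0,a)$ vanishes, so $N(b)=N(a)$ for all $a,b$, proving that $N$ is constant.
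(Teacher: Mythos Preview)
Your proposal is correct. The quicker parenthetical route you sketch---applying Theorem~\ref{equivalenceOfAlternativeInvariance} to get the triple $(Q_{2,\varepsilon},\tau,G)$ and then invoking Theorem~\ref{fullNoetherWithSpaceAndTimeAndGauge}---is exactly the paper's proof, verbatim.

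Your main ``direct'' route is a genuinely different and self-contained argument: instead of reducing to the $\tau$-formulation via the equivalence theorem, you work with the variable-endpoint integral $A_{\theta_\varepsilon(a),\theta_\varepsilon(b)}(\tilde q_\varepsilon)$ itself, split the $\varepsilon$-derivative via Leibniz, apply Lemma~\ref{lemmaOfIntegralFreeFormula} to the family $\tilde q_\varepsilon$ on the fixed interval, and compute $\partial_\varepsilon\tilde q_\varepsilon|_{\varepsilon=0}$ by differentiating $q_\varepsilon\circ\theta_\varepsilon^{-1}$. The chain-rule identity $\partial_\varepsilon\theta_\varepsilon^{-1}(\xi)|_{\varepsilon=0}=-\partial_\varepsilon\theta_\varepsilon(\xi)|_{\varepsilon=0}$ is correct and is the only nontrivial step. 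This approach has the advantage of being independent of Theorem~\ref{equivalenceOfAlternativeInvariance}, so it could in principle replace both Theorems~\ref{equivalenceOfAlternativeInvariance} and~\ref{alternativeFullNoetherWithSpaceAndTimeAndGauge} if one only cared about the constant of motion; the paper's approach, on the other hand, isolates the equivalence of the two invariances as a result of independent interest.
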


\begin{proof}
Simply apply Theorem~\ref{fullNoetherWithSpaceAndTimeAndGauge} to the triple $Q_{2,\varepsilon}(t),\tau(\varepsilon,t), G(\varepsilon,t)$ defined in the second half of Theorem~\ref{equivalenceOfAlternativeInvariance}.
\end{proof}

In terms of the alternative invariances with Definition~\ref{alternativeGaugeInvarianceUnderSpaceAndTimeChangeDef}, the equivalence Theorem~\ref{EquivalenceOfThreeVersionsTheorem} can be rephrased this way:

\begin{Theorem}[Equivalence of invariance conditions in terms of~$\theta_\varepsilon$]
\label{EquivalenceOfThreeVersionsUsingInverseOfThetaTheorem}
Let $q_\varepsilon(t)$ be a space change, $\theta_\varepsilon(t)$ a time change, $G(\varepsilon,t)$ a BH-function.
Define the additional BH-function $\mathcal{G}$ and time change~$\mathcal{T}$:
\begin{equation*}
  \mathcal{G}(\varepsilon,t):=\varepsilon\cdot L\bigl(t,
  q(t),\dot q(t)\bigr)
  \bigl(\partial_\varepsilon
  \theta_\varepsilon(t)|_{\varepsilon=0}\bigr)\,,\quad
  \mathcal{T}(\varepsilon,t):=\varepsilon\cdot
  \frac{\partial_\varepsilon G(\varepsilon,t)|_{\varepsilon=0}
  }%
  {L\bigl(t,q(t),
  \dot q(t)\bigr)}\,.
\end{equation*}
Then the following three conditions are equivalent:
\begin{enumerate}
\item\label{onlyTimeChangeConditionWithInverseOfTheta}
the infinitesimal invariance with space and time change of formu\-la~\eqref{alternativeGaugeInvarianceSpaceAndTimeAction} holds with $G$ replaced by the constant~0, the time change $\theta_\varepsilon(t)$ replaced by $\theta_\varepsilon(t)+\mathcal{T}(\varepsilon,t)$, and $q_\varepsilon(t)$ replaced by either of the following
\begin{align*}
  \mathcal{Q}_\varepsilon(t):={}&
  q_\varepsilon\Bigl(t+
  \varepsilon\cdot
  \bigl(\partial_\epsilon\mathcal{T}(\epsilon,t)
  |_{\epsilon=0}\bigr)\Bigr)\,,\\
  \mathcal{Q}_\varepsilon(t):={}&
  q_\varepsilon(t)+\varepsilon\cdot
  \bigl(\partial_\epsilon\mathcal{T}(\epsilon,t)
  |_{\epsilon=0}\bigr)\dot q(t)
\end{align*}
\item\label{onlyGaugeConditionWithInverseOfTheta}
the infinitesimal BH-invariance with space change of formula~\eqref{gaugeAndSpace} holds with $G$ replaced by~$G+ \mathcal{G}$ and $q_\varepsilon(t)$ replaced by either of the following
\begin{align*}
  \mathcal{Q}_\varepsilon(t):={}&
  q_\varepsilon\Bigl(t-
  \varepsilon\cdot
  \bigl(\partial_\epsilon\theta_\epsilon(t)
  |_{\epsilon=0}\bigr)\Bigr)\,,\\
  \mathcal{Q}_\varepsilon(t):={}&
  q_\varepsilon(t)-\varepsilon\cdot
  \bigl(\partial_\epsilon\theta_\epsilon(t)
  |_{\epsilon=0}\bigr)\dot q(t)
\end{align*}
\item\label{timeChangeAndGaugeConditionWithInverseOfTheta}
the infinitesimal alternative BH-invariance with both space and time chan\-ge of formula~\eqref{alternativeGaugeInvarianceSpaceAndTimeAction} holds for the given space change~$q_\varepsilon$, time change~$\theta_\varepsilon(t)$ and BH-function~$G$.
\end{enumerate}
\end{Theorem}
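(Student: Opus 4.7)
The plan is to reduce Theorem~\ref{EquivalenceOfThreeVersionsUsingInverseOfThetaTheorem} to the already established Theorem~\ref{EquivalenceOfThreeVersionsTheorem} by routing everything through the dictionary provided by Theorem~\ref{equivalenceOfAlternativeInvariance}, which converts between the $\tau$-based invariance of Definition~\ref{gaugeInvarianceUnderSpaceAndTimeChangeDef} and the $\theta_\varepsilon$-based alternative invariance of Definition~\ref{alternativeGaugeInvarianceUnderSpaceAndTimeChangeDef}. The strategy is: start from Condition~\ref{timeChangeAndGaugeConditionWithInverseOfTheta} (alternative BH-invariance for the triple $(q_\varepsilon,\theta_\varepsilon,G)$); translate it into a $\tau$-based statement; apply Theorem~\ref{EquivalenceOfThreeVersionsTheorem} to that statement; and finally translate the resulting conditions back.

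First, using the second half of Theorem~\ref{equivalenceOfAlternativeInvariance} with $\tau(\varepsilon,t):=\theta_\varepsilon(t)$, Condition~\ref{timeChangeAndGaugeConditionWithInverseOfTheta} is equivalent to the standard BH-invariance of Definition~\ref{gaugeInvarianceUnderSpaceAndTimeChangeDef} holding for the triple $(Q_{2,\varepsilon},\theta_\varepsilon,G)$, where $Q_{2,\varepsilon}$ is defined by~\eqref{fromAlternativeToStandard1} or~\eqref{fromAlternativeToStandard2}. This is precisely Condition~\ref{timeChangeAndGaugeCondition} of Theorem~\ref{EquivalenceOfThreeVersionsTheorem} applied to that triple, so the three conditions of Theorem~\ref{EquivalenceOfThreeVersionsTheorem} are mutually equivalent for it. Note that the $Q_{2,\varepsilon}$ arising here is exactly the $\mathcal{Q}_\varepsilon$ appearing in Condition~\ref{onlyGaugeConditionWithInverseOfTheta} of the present theorem, and that the auxiliary function $\mathcal{G}$ of Theorem~\ref{EquivalenceOfThreeVersionsTheorem} coincides with the $\mathcal{G}$ of Theorem~\ref{EquivalenceOfThreeVersionsUsingInverseOfThetaTheorem} because $\partial_\epsilon\tau(0,t)=\partial_\epsilon\theta_\epsilon(t)|_{\epsilon=0}$. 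Consequently Condition~\ref{onlyGaugeCondition} of Theorem~\ref{EquivalenceOfThreeVersionsTheorem} applied to $(Q_{2,\varepsilon},G)$ is literally Condition~\ref{onlyGaugeConditionWithInverseOfTheta} of the present theorem (BH-invariance under pure space change involves no time change, so Definitions~\ref{gaugeInvarianceUnderSpaceAndTimeChangeDef} and~\ref{alternativeGaugeInvarianceUnderSpaceAndTimeChangeDef} collapse to the common Definition of Section~\ref{gaugeInvarianceSection}). This disposes of the equivalence \ref{onlyGaugeConditionWithInverseOfTheta}$\Leftrightarrow$\ref{timeChangeAndGaugeConditionWithInverseOfTheta}.

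For the equivalence with Condition~\ref{onlyTimeChangeConditionWithInverseOfTheta}, apply Condition~\ref{onlyTimeChangeCondition} of Theorem~\ref{EquivalenceOfThreeVersionsTheorem} to the triple $(Q_{2,\varepsilon},\theta_\varepsilon,G)$: this yields a standard (Definition~\ref{invarianceUnderSpaceAndTimeChangeDef}) invariance with null BH-function, time change $\theta_\varepsilon+\mathcal{T}$, and space change $Q_{2,\varepsilon}$. Using the first half of Theorem~\ref{equivalenceOfAlternativeInvariance} with that new time change, this is equivalent to the alternative invariance (Definition~\ref{alternativeGaugeInvarianceUnderSpaceAndTimeChangeDef}) for null BH-function, the same time change $\theta_\varepsilon+\mathcal{T}$, and a space change obtained by shifting $Q_{2,\varepsilon}$ according to~\eqref{fromStandardToAlternative1} or~\eqref{fromStandardToAlternative2}. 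The key first-order calculation is to verify that this composed shift collapses to the $\mathcal{Q}_\varepsilon$ of Condition~\ref{onlyTimeChangeConditionWithInverseOfTheta}: for the linearized variants one expands
\begin{equation*}
  Q_{2,\varepsilon}(t)+\varepsilon\bigl(\partial_\epsilon\theta_\epsilon(t)|_{\epsilon=0}+\partial_\epsilon\mathcal{T}(\epsilon,t)|_{\epsilon=0}\bigr)\dot q(t)
  =q_\varepsilon(t)+\varepsilon\bigl(\partial_\epsilon\mathcal{T}(\epsilon,t)|_{\epsilon=0}\bigr)\dot q(t)
\end{equation*}
since the $\partial_\epsilon\theta_\epsilon|_{\epsilon=0}\,\dot q$ terms from $Q_{2,\varepsilon}$ and from the shift cancel; the composition variant is handled by the analogous chain-rule expansion and agrees to first order in~$\varepsilon$, which is all that matters for infinitesimal invariance.

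The main obstacle is bookkeeping: one must carry the two possible forms (composition versus linearization) of the $\mathcal{Q}_\varepsilon$ in Conditions~\ref{onlyTimeChangeConditionWithInverseOfTheta} and~\ref{onlyGaugeConditionWithInverseOfTheta} through both Theorems~\ref{equivalenceOfAlternativeInvariance} and~\ref{EquivalenceOfThreeVersionsTheorem}, and verify that each variant matches on both sides. Since every infinitesimal invariance condition is a statement about derivatives at $\varepsilon=0$, the two variants of $\mathcal{Q}_\varepsilon$ are interchangeable by the same argument used in Theorem~\ref{equivalenceOfAlternativeInvariance}, and no genuinely new computation is required beyond checking the identifications of $\mathcal{G}$, $\mathcal{T}$, and the space changes on the two sides.
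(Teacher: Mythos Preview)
The paper does not give an explicit proof of this theorem; it introduces it with the sentence ``the equivalence Theorem~\ref{EquivalenceOfThreeVersionsTheorem} can be rephrased this way'', making clear that the intended argument is precisely the one you carry out: pass through the dictionary of Theorem~\ref{equivalenceOfAlternativeInvariance} to convert between the $\tau$- and $\theta_\varepsilon$-formulations, and invoke Theorem~\ref{EquivalenceOfThreeVersionsTheorem} in the middle. Your execution of this plan is correct, including the cancellation that identifies the composite space change in Condition~\ref{onlyTimeChangeConditionWithInverseOfTheta} with the stated~$\mathcal{Q}_\varepsilon$, and your remark that the two variants of $\mathcal{Q}_\varepsilon$ agree to first order in~$\varepsilon$ (which is all the infinitesimal conditions see) is the right justification for treating them interchangeably.
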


You will notice that, in this formulation, trivializing either time change or BH-function involves modifying the space change too. We wonder if we would have discovered the equivalence results of Theorem~\ref{EquivalenceOfThreeVersionsTheorem} if we had worked with the standard time change~$\theta_\varepsilon$ and had ignored~$\tau_\varepsilon$.  Also, formula~\eqref{constantAlongMotionForAlternativeSpaceTimeChangeAndGauge} for the constant of motion for the alternative invariance is more complicated than formula~\eqref{constantAlongMotionForSpaceTimeChangeAndGauge}. The very definition of alternative invariance involves an inverse time change $\theta_\varepsilon^{-1}$, and as such is somewhat less elementary than the original Definition~\ref{gaugeInvarianceUnderSpaceAndTimeChangeDef}. These are the reasons why we have decided to standardize time change to Definitions~\ref{invarianceUnderSpaceAndTimeChangeDef} and~\ref{gaugeInvarianceUnderSpaceAndTimeChangeDef} throughout most of the rest of this paper, although Definition~\ref{alternativeGaugeInvarianceUnderSpaceAndTimeChangeDef} seems to be the standard in the literature.

\section{The simplest examples}
\label{EnergyMomentumAngularMomentumSection}

\begin{Example}\label{timeShiftInvarianceExample}
Let us see how Theorem~\ref{EquivalenceOfThreeVersionsTheorem} works out for Conservation of Energy when the Lagrangian $L(q,\dot q)$ is autonomous. As already noted in Section~\ref{infinitesimalVariationalInvarianceSection}, there is infinitesimal BH-invariance with space change type with the choices
\begin{equation}\label{conservationOfEnergyWithGauge}
  q_\varepsilon(t):=q(t+\varepsilon)\,,\quad
  \tau_1(\varepsilon,t):=t\,,\quad
  G_1(\varepsilon,t):=-\varepsilon L\bigl(q(t),\dot q(t)\bigr)\,.
\end{equation}
Formulas~\eqref{auxiliaryGauge} and~\eqref{auxiliaryTime} then become
\begin{equation*}
  \mathcal{G}_1\equiv0\,,\quad
  \mathcal{T}_1(\varepsilon,t):=\varepsilon
  \frac{\partial_\varepsilon G(0,t)}{L\bigl(q(t),
  \dot q(t)\bigr)}=
  \varepsilon\frac{-L\bigl(q(t),\dot q(t)\bigr)}{L\bigl(q(t),
  \dot q(t)\bigr)}=-\varepsilon\,,
\end{equation*}
Theorem~\ref{EquivalenceOfThreeVersionsTheorem} says that there is infinitesimal invariance with space and time change, that is, with the alternative choices
\begin{equation}\label{conservationOfEnergyWithTimeChange}
  q_\varepsilon(t):=q(t+\varepsilon)\,,\quad
  \tau_2(\varepsilon,t):=
  \tau_1(\varepsilon,t)+\mathcal{T}_1(\varepsilon,t)=
  t-\varepsilon\,,\quad
  G_2\equiv0\,.
\end{equation}
We have rediscovered precisely the time change $t-\varepsilon$ of formula~\eqref{timeChangeForEnergy} in Subsection~\ref{invariancesWithSpaceAndTimeChangeAndGaugeSubsection}.

If we had started out with these last choices~\eqref{conservationOfEnergyWithTimeChange}, we would get
\begin{equation*}
  \mathcal{G}_2(\varepsilon,t):=
  \varepsilon L\bigl(q(t),\dot q(t)\bigr)
  \bigl(\partial_\varepsilon\tau_2(\varepsilon,t)\big|_{\varepsilon=0}
  \bigr)
  =-\varepsilon L\bigl(q(t),\dot q(t)\bigr)\,,\quad
  \mathcal{T}_2:=0\,,
\end{equation*}
and we would be led back to infinitesimal BH-invariance with space change type with the original choice~\eqref{gaugeForEnergy} or~\eqref{conservationOfEnergyWithGauge}. Whichever the approach, in the end the energy first integral of equation~\eqref{constantAlongMotionForSpaceTimeChangeAndGauge} becomes $\partial_{\dot q}L(q,\dot q)\cdot\dot q-L(q,\dot q)$. It is unpleasant that the invariance with BH-function given  by~\eqref{conservationOfEnergyWithGauge} is merely infinitesimal, while the invariance with nontrivial time change given by~\eqref{conservationOfEnergyWithTimeChange} is actually a finite invariance. We propose here the following alternative choice for~$G$
\begin{equation}\label{conservationOfEnergyWithGaugeAlternative}
  q_\varepsilon(t):=q(t+\varepsilon)\,,\quad
  \tau_2(\varepsilon,t):=t\,,\quad
  G_2(\varepsilon,t):=-\int_t^{t+\varepsilon}
  L\bigl(q(\xi),\dot q(\xi)\bigr)d\xi
\end{equation}
which recovers a perfect finite invariance. In fact, the quantity
\begin{align}\label{conservationOfEnergyFiniteInvarianceWithGauge}
  f_{a,b}(\varepsilon):={}&
  A_{\tau_2(\varepsilon,a),\tau_2(\varepsilon,b)}(q_\varepsilon)
  +G_2(\varepsilon,b)-
  G_2(\varepsilon,a)=\\
 ={}&
  \int_{a}^{b}
  L\bigl(q(\xi+\varepsilon),\dot q(\xi+\varepsilon)\bigr)\,d\xi
  +G_2(\varepsilon,b)-
  G_2(\varepsilon,a)=\notag\\
  ={}&
  \int_{a}^{b}
  L\bigl(q(t),\dot q(t)\bigr)\,dt
  \,,
\end{align}
does not depend on $\varepsilon$. Or again, if your favourite mnemonic reference is the non-integrated formula~\eqref{infinitesimalInvarianceWithTauAndGauge}, the expression
\begin{equation*}
  L\bigl(q_\varepsilon(\xi),\dot q_\varepsilon(\xi)\bigr)
  \Big|_{\xi=\tau_1(\varepsilon,t)}
  \partial_t\tau_1(\varepsilon,t)+
  \partial_tG_2(\varepsilon,t)
  = L\bigl(q(t),\dot q(t)\bigr)
\end{equation*}
does not depend on $\varepsilon$. The associated first integral is again the energy. The BH-function choice~\eqref{conservationOfEnergyWithGaugeAlternative} seems to be unusual, because it is an \emph{integral functional}, and not a point function of $q(t),\dot q(t)$.

We could apply again Theorem~\ref{EquivalenceOfThreeVersionsTheorem} to the triple $q_\varepsilon,\tau_2,G_2$ and get a new variant with trivial BH-function and nontrivial time change, but we would relinquish the finiteness of the invariance.

The reader can try the alternative space change $q_\varepsilon(t):= q(t)+\varepsilon\cdot\dot q(t)$, which is simply the first-order expansion of $q(t+\varepsilon)$ with respect to~$\varepsilon$.
\end{Example}

\begin{Example}\label{translationInvarianceExample}
(Momentum).
Suppose that $L(t,q,\dot q)$ is invariant in the direction of~$u\in\R^n$: $L(t,q+\varepsilon u,\dot q)\equiv L(t,q,\dot q)$
for all $t,\varepsilon\in\R$, $q,\dot q\in\R^n$. Then for any $q(t)$ there is obvious finite invariance for the translated family 
$q_\varepsilon (t):=q(t)+\varepsilon u$, $\tau(\varepsilon,t)\equiv t$, $G\equiv0$. The constant of motion when $q(t)$ solves the Lagrange equations is the component of the momentum in the direction of~$u$:
\begin{equation*}
  \partial_{\dot q}L\bigl(t,q(t),\dot q(t)\bigr)
  \cdot u\,.
\end{equation*}
Of course here $\mathcal{G}=\mathcal{T}\equiv0$, and all three conditions of Theorem~\ref{EquivalenceOfThreeVersionsTheorem} collapse into one.
\end{Example}

\begin{Example}\label{rotationalInvarianceExample}
(Angular momentum)
Consider a point in the $\R^2$ plane that is driven by a (possibly time-dependent) central force field: $L(t,q,\dot q):=\frac12\lVert\dot q\rVert^2- U\bigl(t,\lVert q\rVert\bigr)$. Given a smooth trajectory $q(t)$ in~$\R^2$, define the rotation family
\begin{equation*}
  q_\varepsilon(t):=\begin{pmatrix}
  \cos\varepsilon & -\sin\varepsilon\\
  \sin\varepsilon & \cos\varepsilon
  \end{pmatrix} q(t)\,,
\end{equation*}
with trivial $\tau(\varepsilon,t):=t$ and $G\equiv0$. It is clear that we have finite invariance: $L\bigl(t,q_\varepsilon(t),\dot q_\varepsilon(t)\bigr)$ does not depend on~$\varepsilon$. Noether's theorem gives the first integral of angular momentum for all Lagrange motions
\begin{equation*}
  \partial_{\dot q} L\cdot
  \partial_\varepsilon q_\varepsilon|_{\varepsilon=0}=
  \dot q\cdot \begin{pmatrix} 0 & -1\\ 1 & 0 \end{pmatrix}q
  =\det(q,\dot q).
\end{equation*}
Again $\mathcal{G}\equiv\mathcal{T}\equiv0$.
\end{Example}

\section{The harmonic oscillator}
\label{harmonicOscillatorSection}

Let us try some ``random'' space changes with the harmonic oscillator.

\begin{Example}\label{spaceShiftForOscillator}
(Space shift for the harmonic oscillator).
Take the one-dimensional harmonic oscillator $L(t,q,\dot q)=\dot q^2/2-q^2/2$ and the space change $q_\varepsilon(t):= q(t)+ \varepsilon$. The Lagrange equation is $\ddot q+q=0$. The constant of motion of Theorem~\ref{NoetherWithSpaceChangeAndGauge} is computed as
\begin{align}
  t\mapsto{}&
  \partial_{\dot q}
  L\bigl(t,q(t),\dot q(t)\bigr)\cdot
  \partial_\varepsilon q_\varepsilon(t)
  \big|_{\varepsilon=0}-
  \int_{t_0}^t
  \frac{\partial}{\partial\varepsilon}
  L\bigl(s,q_\varepsilon(s),\dot q_\varepsilon(s)\bigr)
  \Big|_{\varepsilon=0}ds=\\
   ={}&\dot q(t)-\int_{t_0}^t-q(s)ds=
   \dot q(t)-\int_{t_0}^t\ddot q(s)ds=
   \dot q(t)-\dot q(t)+\dot q(t_0)=\dot q(t_0)\,.
\end{align}
Let us try the total derivative condition:
\begin{equation}
  \Bigl(\frac{\partial}{\partial\varepsilon}
  L\bigl(t,q_\varepsilon(t),\dot q_\varepsilon(t)\bigr)
  \Bigr)\Big|_{\varepsilon=0}=-q(t).
\end{equation}
Can $-q(t)$ be the total time derivative of some function $\psi(t, q(t), \dot q(t))$, with $\psi$ being the same for all smooth paths~$q(t)$? Surely not, because otherwise the integral of $-q(t)$ over an interval $[t_0,t_1]$ would only depend on the end values, and not on the path:
$$\int_{t_0}^{t_1}-q(s)ds=
  \int_{t_0}^{t_1}\frac{d}{ds}\psi(s,q(s), \dot q(s))ds=
  \psi(t_1,q(t_1), \dot q(t_1))-
  \psi(t_0,q(t_0), \dot q(t_0)),$$
which is obviously not the case for generic paths.

If we restrict $q(t)$ to be a motion then
\begin{equation}
  \Bigl(\frac{\partial}{\partial\varepsilon}
  L\bigl(t,q_\varepsilon(t),\dot q_\varepsilon(t)\bigr)
  \Bigr)\Big|_{\varepsilon=0}=-q(t)=\ddot q(t),
\end{equation}
so that we can choose $\psi(t,q,\dot q)\equiv\dot q$. Let us apply Theorem~\ref{NoetherTheoremWithTotalDerivative} (see formula~\eqref{trivialPsi2}). The function $\varphi$ is the constant~1:
\begin{equation}
  \partial_\varepsilon q_\varepsilon(t)\big|_{\varepsilon=0}=1=
  \varphi\bigl(t,q(t),\dot q(t)\bigr)
  \qquad\forall t\,.
\end{equation}
The first integral is trivially~0:
\begin{equation}
  (t,q,\dot q)\mapsto
  \partial_{\dot q}L(t,q,\dot q)\cdot\varphi(t,q,\dot q)-
  \psi(t,q,\dot q)=
  \dot q-\dot q\equiv0\,.
\end{equation}
\end{Example}

\begin{Example}\label{spaceDilationForOscillator}
(Space dilation for the harmonic oscillator).
Take again the one-dimensional harmonic oscillator $L(t,q,\dot q)=\dot q^2/2-q^2/2$ and the space change $q_\varepsilon(t):= (1+\varepsilon) q(t)$. We can compute:
\begin{equation}
  \Bigl(\frac{\partial}{\partial\varepsilon}
  L\bigl(t,q_\varepsilon(t),\dot q_\varepsilon(t)\bigr)
  \Bigr)\Big|_{\varepsilon=0}=\dot q(t)^2-q(t)^2=
  2L\bigl(t,q(t),\dot q(t)\bigr),
\end{equation}
whence the integral constant of motion
\begin{equation}
  q(t)\dot q(t)-2\int_{t_0}^t L\bigl(s,q(s),\dot q(s)\bigr)ds.
\end{equation}
The expression $\dot q^2-q^2$ is not a total derivative, as the reader can check by integrating it on different paths with the same endpoints, end velocities etc. However, if we use the Lagrange equation $\ddot q=-q$ in the following way we do get a total derivative:
\begin{equation}
  \Bigl(\frac{\partial}{\partial\varepsilon}
  L\bigl(t,q_\varepsilon(t),\dot q_\varepsilon(t)\bigr)
  \Bigr)\Big|_{\varepsilon=0}=\dot q(t)^2-q(t)^2=
  \dot q(t)^2+q(t)\ddot q(t)=
  \frac{d}{dt}\bigl(q(t)\dot q(t)\bigr),
\end{equation}
but the resulting constant of motion is again trivially~0. The Lagrange equation $\ddot q=-q$ can be used to transform the expression $\dot q^2-q^2$ in multiple ways, some of which may be more useful than others. For example a less subtle one would give us $\dot q^2-\ddot q^2$, which is not a total derivative.
\end{Example}

\begin{Example}\label{nonlocalSpaceChangeForHarmonicOscillator} 
(Nonlocal space change for the harmonic oscillator).
Take once more the one-dimensional harmonic oscillator $L(t,q,\dot q)=\dot q^2/2-q^2/2$ with the nonlocal space change 
\begin{equation}
  q_\varepsilon(t):= q(t)+\varepsilon\int_0^t q(\tau)d\tau.
\end{equation}
We can compute:
\begin{equation}
  \Bigl(\frac{\partial}{\partial\varepsilon}
  L\bigl(t,q_\varepsilon(t),\dot q_\varepsilon(t)\bigr)
  \Bigr)\Big|_{\varepsilon=0}=
  \frac12\frac{d}{dt}\biggl(q(t)^2-
  \Bigl(\int_0^t q(\tau)d\tau\Bigr)^2\biggr),
\end{equation}
whence the following nonlocal constant of motion, for any arbitrary choice of~$t_0$:
\begin{equation}\label{squareIntegralConstantOfMotion}
  \dot q(t)\int_0^t q(\tau)d\tau
  -\frac12q(t)^2+\frac12q(t_0)^2+
  \frac12\biggl(\int_0^t q(\tau)d\tau\biggr)^2
  -\frac12\biggl(\int_0^{t_0} q(\tau)d\tau\biggr)^2.
\end{equation}
If we use the Lagrange equation $\ddot q=-q$ the constant of motion simplifies to
\begin{equation}
  \bigl(\dot q(0)-\dot q(t_0)\bigr)\dot q(t_0),
\end{equation}
which is nontrivial if $t_0$ is not a multiple of~$2\pi$. One more constant of motion results if we remove the two terms with~$t_0$ (which are obvious constants of motion) from~\eqref{squareIntegralConstantOfMotion}: 
\begin{equation}
  \dot q(t)\int_0^t q(\tau)d\tau
  -\frac12q(t)^2+
  \frac12\biggl(\int_0^t q(\tau)d\tau\biggr)^2.
\end{equation}
which, using $\ddot q=-q$, simplifies to
\begin{equation}
  -\frac12\bigl(q(t)^2+\dot q(t)^2-\dot q(0)^2\bigr).
\end{equation}
We have found a roundabout way to prove that the energy $\frac12(q^2+\dot q^2)$ is constant. This will be less surprising if we go back and use the Lagrange equation already in the expression of~$q_\varepsilon$.
\end{Example}

\section{Dissipative systems}
\label{dissipativeSystemsSection}

Let us take up again the ``dissipative'' Lagrangian
$\mathcal{L}(t,q,\dot q):=e^{ht} L(q,\dot q)$ of formula~\eqref{dissipativeSystemLagrangian},
with the space change and BH-function given by~\eqref{dissipativeSystemSpaceChange} and~\eqref{dissipativeSystemGauge} and the unusual constant of motion
\begin{equation*}
  \partial_{\dot q}\mathcal{L}\bigl(t,q(t),\dot q(t)\bigr)
  \cdot\dot q(t)-
  \mathcal{L}\bigl(q(t),\dot q(t)\bigr)
  +h\int_{t_0}^t\mathcal{L}\bigl(s,q(s),\dot q(s)\bigr)ds\,.
\end{equation*}
For example, take $m,k>0$, $h=k/m$, and the familiar Lagrangian $L(q,\dot q)=\frac{1}{2} m\lVert\dot q\rVert^2-U(q)$. The Lagrange equations for $\mathcal{L}$ are $m\ddot q+\nabla U(q)=-k\dot q$, where we see a viscous resistance term $-k\dot q$, which justifies our use of the term ``dissipative''. The ``usual'' energy for the system is of course
\begin{equation*}
  E(q,\dot q)=\frac{1}{2}m\lVert\dot q\rVert^2+U(q)=
  \partial_{\dot q}L(q,\dot q)\cdot\dot q-
  L(q,\dot q)\,,
\end{equation*}
and it is not conserved. Still, if we switch our attention to a ``dissipative energy'' defined as
\begin{equation*}
  \mathcal{E}(t,q,\dot q)=e^{kt/m}E(q,\dot q)=
  \partial_{\dot q}\mathcal{L}(t,q,\dot q)\cdot\dot q(t)-
  \mathcal{L}(t,q,\dot q)\,,
\end{equation*}
we can rewrite the constant of motion as
\begin{equation}\label{energyPlusAction2}
  \mathcal{E}\bigl(t,q(t),\dot q(t)\bigr)+
  h\int_{t_0}^t
  \mathcal{L}\bigl(s,q(s),\dot q(s)\bigr)ds\,.
\end{equation}
That is, the dissipative energy decreases (assuming~$L>0$) over time and its total loss over any time interval is proportional to the dissipative Hamiltonian action in that interval.

A point-function first integral can be recovered in a special case as follows. Again with the familiar Lagrangian $L(q,\dot q)=\frac{1}{2} m\lVert\dot q\rVert^2-U(q)$ we can compute, using the Lagrange equation,
\begin{equation*}\begin{split}
  \frac{d}{dt} \bigl(m e^{kt/m} q\cdot\dot q\bigr)={}&
  e^{kt/m}\bigl(kq\cdot \dot q+m\lVert\dot q\rVert^2-
  q\cdot\nabla U(q)-kq\cdot \dot q\bigr)=\\
  ={}&e^{kt/m}\bigl(m\lVert\dot q\rVert^2-
  q\cdot\nabla U(q)\bigr)\,.
  \end{split}
\end{equation*}
Assume that $U$ is homogeneous of degree~2, i.e., $U(\lambda q) =\lambda^2U(q)$ if $\lambda>0$. Then $q\cdot\nabla U(q)=2U(q)$, so that the integrand in formula~\eqref{energyPlusAction2} becomes a total derivative:
\begin{equation*}
  \frac{d}{dt} \bigl(m e^{kt/m} q\cdot \dot q\bigr)=
  e^{kt/m}\left(m\lVert\dot q\rVert^2-2 U(q)\right)
  =2\mathcal{L}\bigl(t,q(t),\dot q(t)\bigr)
\end{equation*}
and the conserved quantity~\eqref{energyPlusAction2} is
\begin{equation*}
  e^{kt/m}\Bigl(
  E(q(t),\dot q(t))+
  \frac{1}{2}kq(t)\cdot \dot q(t)\Bigr)-
  \frac{1}{2}e^{kt_0/m}kq(t_0)\cdot \dot q(t_0).
\end{equation*}
The last term containing~$t_0$, as it is an obvious constant of motion, and we can ignore it. We finally obtain a true point-function first integral:
\begin{equation}\label{dissipativeFirstIntegral}
  e^{kt/m}\Bigl(
  E(q,\dot q)+
  \frac{1}{2}kq\cdot \dot q\Bigr)
\end{equation}
(this formula can also be deduced from Theorem~\ref{NoetherTheoremWithTotalDerivative} more directly). Let us see how we can arrive at this same first integral with a trivial BH-function, following Theorem~\ref{EquivalenceOfThreeVersionsTheorem}. The original $G$ of formula~\eqref{dissipativeSystemGauge} in our specialized assumptions is
\begin{align*}
  G(\varepsilon,t):={}&
  -\varepsilon e^{ht} L\bigl(q(t),\dot q(t)\bigr)
  +\varepsilon h\int_{t_0}^te^{hs }
  L\bigl(q(s),\dot q(s)\bigr)ds=\\
  ={}&-\varepsilon \mathcal{L}\bigl(t,q(t),\dot q(t)\bigr)+
  \varepsilon \frac{k}{m}
  \int_{t_0}^t\mathcal{L}\bigl(s,q(s),\dot q(s)\bigr)ds=\\
  ={}&\varepsilon \mathcal{L}\bigl(t,q(t),\dot q(t)\bigr)+
  \varepsilon\frac{k}{2}
   e^{kt/m} q(t)\cdot \dot q(t)-
   \varepsilon\frac{k}{2}
   e^{kt_0/m} q(t_0)\cdot \dot q(t_0)\,.
\end{align*}
Again we ignore the $t_0$ term and take the simplified
\begin{equation*}
  G_1(\varepsilon,t):=
  -\varepsilon \mathcal{L}\bigl(t,q(t),\dot q(t)\bigr)+
  \varepsilon\frac{k}{2}
  e^{kt/m} q(t)\cdot \dot q(t)\,.
\end{equation*}
The additional time change of formula~\eqref{auxiliaryTime} is
\begin{equation*}
  \mathcal{T}(\varepsilon,t):=
  \varepsilon\cdot
  \frac{-\mathcal{L}+\frac{k}{2}
  e^{kt/m} q\cdot \dot q}{\mathcal{L}}=
  -\varepsilon+\varepsilon k\frac{q\cdot\dot q}{2L}
\end{equation*}
Theorem~\ref{EquivalenceOfThreeVersionsTheorem} tells us that we can obtain the first integral~\eqref{dissipativeFirstIntegral} using
\begin{equation}\label{spaceAndTimeChangeForDissipative}
  q_\varepsilon(t):=q(t+\varepsilon)\,,\quad
  \tau(\varepsilon,t):=
  t-\varepsilon+\varepsilon k\frac{q\cdot\dot q}{L}\,,\quad
  G_2(\varepsilon,t)\equiv0\,.
\end{equation}

The one-dimensional case, where $U(q)=cq^2$, is treated by Logan~\cite{Logan} using a different set of time and space change:
\begin{equation}\label{LoganSpaceAndTimeChangeForDissipative}
  Q_\varepsilon(t):=
  \Bigl(1-\frac{\varepsilon k}{2m}\Bigr)q(t-\varepsilon)\,,
  \quad
  \tau_2(\varepsilon,t):=t+\varepsilon\,,\quad
  G_2(\varepsilon,t)\equiv0\,,
\end{equation}
which he deduces by solving the generalized Killing equations. You will notice that the time change in~\eqref{spaceAndTimeChangeForDissipative} depends on~$\dot q$, while the one in~\eqref{LoganSpaceAndTimeChangeForDissipative} does not. The reader can compute how the equivalence Theorem~\ref{EquivalenceOfThreeVersionsTheorem} applies to the triple~\eqref{LoganSpaceAndTimeChangeForDissipative}.


\section{The Lane-Emden equation}
\label{LaneEmdenSection}

The Lane-Emden system has the following Lagrangian function and Lagrange equation:
\begin{equation}
  L(t,q,\dot q):=t^2\Bigl(\frac{\dot q^2}{2}-\frac{q^{n+1}}{n+1}
  \Bigr),\qquad
  \ddot q=-q^n-\frac{2}{t}\dot q,
\end{equation}
where $q\in\R$, $n\in\N$. It is used in Astrophysics to model some aspect of star evolution~\cite{Chandrasekhar}. It is known that if $q(t)$ is a solution then also the rescaled $q_\varepsilon(t):= e^\varepsilon q(e^{\varepsilon (n-1) /2}t)$ is also a solution. Let us take this $q_\varepsilon$ as space change. Assuming that $q(t)$ solves Lagrange's equation we obtain:
\begin{multline}
  \frac{\partial}{\partial\varepsilon}
  L\bigl(t,q_\varepsilon(t),\dot q_\varepsilon(t)\bigr)
  \Big|_{\varepsilon=0}=
  \frac{d}{dt}\biggl(\frac{1-n}{2}tL\bigl(t,q(t),\dot q(t)\bigr)-
  \frac{5-n}{4}t^2q(t)\dot q(t)\biggr)+{}\\
  {}+\frac{(n-5)(n-1)}{4(n+1)}t^2q(t)^{n+1}.
\end{multline}
From Theorem~\ref{theoremOfVeryGeneralConstantOfMotion} we deduce the following integral constant of motion:
\begin{multline}
  \partial_{\dot q}
  L\bigl(t,q(t),\dot q(t)\bigr)\cdot
  \partial_\varepsilon q_\varepsilon(t)
  \big|_{\varepsilon=0}
  -\int \frac{\partial}{\partial\varepsilon}
  L\bigl(t,q_\varepsilon(t),\dot q_\varepsilon(t)\bigr)
  \Big|_{\varepsilon=0}dt=\\
  =-\frac{(n-1) t^2 \bigl((n+1) q(t)
   \dot q(t)+(n+1) t \dot q(t)^2+2 t
   q(t)^{n+1}\bigr)}{4 (n+1)}-{}\\
  -\frac{(n-5)(n-1)}{4(n+1)}\int_{t_0}^ts^2q(s)^{n+1}ds.
\end{multline}
This expression vanishes identically when $n=1$, and it gives a point-function first integral in the well-known case $n=5$~\cite{Mach}.

If we try with the different space change $q_\varepsilon(t):= e^\varepsilon q(e^\varepsilon t)$, or, equivalently, with $q_\varepsilon(t):= q(t)+\varepsilon(q(t)+t\dot q(t))$ and again assume that $q(t)$ solves the Lagrange equation, the calculations are a little simpler:
\begin{equation}
  \frac{\partial}{\partial\varepsilon}
  L\bigl(t,q_\varepsilon(t),\dot q_\varepsilon(t)\bigr)
  \Big|_{\varepsilon=0}
  =\frac{d}{dt}\biggl(-\frac{2}{n+1}t^3q(t)^{n+1}\biggr)
  {}+\frac{5-n}{n+1}t^2q(t)^{n+1}.
\end{equation}
and the consequent integral constant of motion is just a multiple of the previous one:
\begin{equation}
  t^2 \Bigl(\frac{2}{n+1} t q^{n+1}+q\dot q+t
   \dot q^2\Bigr)+\frac{n-5}{n+1} \int t^2 q(t)^{n+1} \,dt
\end{equation}
with the advantage that it is nontrivial even when $n=1$.

The integral constant of motion may be of some interest when $n$ is odd and~$>5$, as for example $n=7$:
\begin{equation}
  t^2 \bigl(t q^8+4 t\dot q^2+4 q\dot q\bigr)+
   \int t^2 q(t)^8 \, dt.
\end{equation}
Observe that the integral $\int t^2q(t)^{n+1}dt$ is increasing in~$t$ and the function $(q,\dot q)\mapsto (n+1) q\dot q+(n+1) t \dot q^2+2 t q^{n+1}$ is coercive for large~$t$, so that it is a Lyapunov function for the system.


\section{Homogeneous potentials}
\label{homogeneousPotentialsSection}

Consider a Lagrangian of the form $L(t,q,\dot q):=\frac{1}{2}m \lVert\dot q\rVert^2-U(q)$, where the potential $U$ is positively homogeneous of degree~$\alpha$, i.e., $U(\lambda q)=\lambda^\alpha U(q)$ for all~$\lambda>0$. Define the space change as
\begin{equation*}
  q_\varepsilon(t)=
  e^\varepsilon q\bigl(e^{\varepsilon(\alpha /2-1)}t\bigr)\,.
\end{equation*}
We chose this particular~$q_\varepsilon$ because it is well-known that $q_\varepsilon(t)$ is a Lagrange motion whenever $q(t)$~is. We can compute
\begin{align*}
  \frac{\partial}{\partial\varepsilon}
  L\bigl(t,q_\varepsilon(t),\dot q_\varepsilon(t)
  \bigr)\Big|_{\varepsilon=0}={}&
  \alpha L+\Bigl(\frac{\alpha }{2}-1\Bigr)t\dot L=\\
  ={}&\frac{d}{dt}\biggl(\Bigl(\frac{\alpha}{2}-1\Bigr)tL+
  \Bigl(\frac{\alpha}{2}+1\Bigr)\int_{t_0}^tL\,ds\biggr)\,.
\end{align*}
Hence we can take the following BH-function
\begin{equation*}
  G(\varepsilon,t):=-\varepsilon
  \biggl(\Bigl(\frac{\alpha}{2}-1\Bigr)tL+
  \Bigl(\frac{\alpha}{2}+1\Bigr)\int_{t_0}^tL\,ds\biggr)\,
\end{equation*}
and deduce the following constant of motion:
\begin{equation}\label{homogeneousPotentialConstantOfMotion}
  \bigl(\partial_{\dot q}L\cdot\partial_\varepsilon q_\varepsilon
  +\partial_\varepsilon G(\varepsilon,t)
  \bigr)
  |_{\varepsilon=0}=
  mq\cdot\dot q+\Bigl(\frac{\alpha}{2}-1\Bigr)tE-
  \Bigl(\frac{\alpha}{2}+1\Bigr)\int_{t_0}^tL\,ds\,,
\end{equation}
where $E:=\frac12m\lVert\dot q\rVert^2+U(q)$ is the energy, which is conserved too, because the system is autonomous. As you can see, the constant of motion~\eqref{homogeneousPotentialConstantOfMotion} involves the Hamiltonian action and the energy.

The special case $\alpha =-2$ is specially interesting, because we get a very simple time-dependent first integral in the usual sense:
\begin{equation}\label{homogeneousPotentialFistIntegral}
  F=mq\cdot\dot q-2tE\,.
\end{equation}
Because of Theorem~\ref{EquivalenceOfThreeVersionsTheorem} we can obtain this first integral with the following space and time change and null BH-function:
\begin{equation*}
  q_\varepsilon(t)=e^\varepsilon q(e^{-2\varepsilon}t)\,,\quad
  \tau(\varepsilon,t):=t+\mathcal{T}(\varepsilon,t):=
  t+\varepsilon \frac{2tL}{L}=(1+2\varepsilon)t\,.
\end{equation*}
Another equally valid choice for time change is $\tau_1(\varepsilon, t):= e^{2\varepsilon}t$, which matches better with the space change.

Some consequences of the first integral~\eqref{homogeneousPotentialFistIntegral} are easy to draw. We can take the antiderivative with respect to time in the equation $0=mq\cdot\dot q-2tE-F$ and obtain one more time-dependent constant of motion
\begin{equation}\label{homogeneousPotentialOtherFistIntegral}
  F_1=\frac{1}{2}m\lVert q\rVert^2-t^2E-tF\,.
\end{equation}
This means that, for any given orbit, the couple $(t,\lVert q\rVert)$ always lie on a conic in~$\R^2$: on an ellipse if $E<0$, on a hyperbole if~$E>0$, and on a straight line if~$E=0$ and $F\ne0$, and on a single point if~$E=F=0$. We can also solve for~$\lVert q\rVert$:
\begin{equation}\label{homogeneousPotentialRadiusOnTime}
  \lVert q\rVert=\frac{2}{m}\sqrt{t^2E+tF+F_1}\,.
\end{equation}
This formula gives exactly how the distance from the origin depends on time, even though we don't know the shape of the orbit. For example, when $E<0$ the orbit is born and dies at the origin at the instants $(-F\pm\sqrt{F^2-4EF_1})/(2E)$.

If we solve equation~\eqref{homogeneousPotentialOtherFistIntegral} for~$t$ and replace into equation~\eqref{homogeneousPotentialFistIntegral} we obtain a constant of motion that does not involve the current time~$t$, but only $q(t),\dot q(t)$ and the initial data $t_0,q(t_0),\dot q(t_0)$.

The central potential $U(q)= -k/\Vert q\rVert^2$ is a well-known special case, for which Danby~\cite{Danby} derives formula~\eqref{homogeneousPotentialOtherFistIntegral}. Using the angular momentum, the shapes of the orbits can be calculated explicitly and are called Cotes' spirals \cite[p.~83]{Whittaker}. 

Another famous potential that is homogenous of degree $-2$ is Calogero's inverse-square scattering potential:
\begin{equation}\label{calogeroPotential}
  U(q_1,\dots,q_n)=\sum_{1\le j<k\le n}\frac{1}{(q_j-q_k)^2},
\end{equation}
for $q_j\in\R$, $q_j\ne q_k$ when $j\ne k$. The associated system is analytically integrable~\cite{MoserIsospectral}.


\section{The Toda lattice}
\label{todaLattice}

The nonperiodic Toda lattice is a system of $n$ particles $q_1,q_2, \dots,q_n \in\R$, such that each successive pair $q_k,q_{k+1}$ repel each other through an exponential potential. The Lagrangian is
\begin{gather}\label{TodaLagrangian}
  L(Q,\dot Q):=\frac12\lVert\dot Q\rVert^2-\mathcal{V}(Q),\\
  \text{where}\quad
  Q=(q_1,\dots,q_n)\in\R^n,\quad
  \mathcal{V}(Q):=\sum_{k=1}^{n-1}e^{q_k-q_{k+1}}.
\end{gather}
The system was proved to be integrable by H\'enon, Flaschka and Manakov (see for example~\cite{MoserToda}). Here we deduce a simple integral constant of motion. Consider the space change $Q_\varepsilon(t):=Q(e^{-\varepsilon}t)+(\varepsilon,2\varepsilon, \dots,n\varepsilon)$. This is chosen so that $\mathcal{V} (Q_\varepsilon(t))=e^{-\varepsilon} \mathcal{V} (Q(e^{-\varepsilon} t))$. The following relation holds:
\begin{equation}\label{TodaTotalDerivative}
  \frac{\partial}{\partial\varepsilon}
  L\bigl(Q_\varepsilon(t),\dot Q_\varepsilon(t)\bigr)
  \Big|_{\varepsilon=0}=
  -\frac12\lVert\dot Q(t)\rVert^2
  -\frac{d}{dt}\Bigl(tL\bigl(Q(t),\dot Q(t)\bigr)\Bigr).
\end{equation}
The constant of motion of Theorem~\ref{theoremOfVeryGeneralConstantOfMotion} with $t_0=0$ takes the following forms:
\begin{align}\label{TodaConstantOfMotion}
  \partial_{\dot q}
  &L\bigl(Q(t),\dot Q(t)\bigr)\cdot
  \partial_\varepsilon Q_\varepsilon(t)
  \big|_{\varepsilon=0}-
  \int_{0}^t
  \frac{\partial}{\partial\varepsilon}
  L\bigl(Q_\varepsilon(s),\dot Q_\varepsilon(s)\bigr)
  \Big|_{\varepsilon=0}ds=\notag\\
  &=-t\lVert\dot Q(t)\rVert^2+\sum_{k=1}^n k\dot q_k(t)+
  tL\bigl(Q(t),\dot Q(t)\bigr)
  +\frac12\int_{0}^t\lVert\dot Q(s)\rVert^2ds=\notag\\
  &=-tE+\sum_{k=1}^n k\dot q_k(t)
  +\frac12\int_{0}^t\lVert\dot Q(s)\rVert^2ds=\notag\\
  &=-tE+\sum_{k=1}^n k\dot q_k(t)
  +\int_{0}^t\Bigl(\frac12\lVert\dot Q(s)\rVert^2+
  \mathcal{V}(Q(s))\Bigr)ds-
  \int_{0}^t\mathcal{V}(Q(s))ds=\notag\\
  &=\sum_{k=1}^n k\dot q_k(t)-
  \int_{0}^t\mathcal{V}(Q(s))ds,
\end{align}
where $E=\frac12\lVert\dot Q\rVert^2+\mathcal{V}(Q)$ is the (constant) energy.


\section{Cone potentials}
\label{conePotentials}

Generalizing Example~\ref{spaceShiftForOscillator} of Section~\ref{spaceShiftForOscillator}, take a Lagrangian of the form $L(t,q,\dot q)=\frac{1}{2}\lVert\dot q\rVert-\mathcal{V}(q)$ and the space-shift $q_\varepsilon(t)= q(t)+\varepsilon v$, with $v\in\R^N$ a constant vector. We can compute $\partial_\varepsilon q_\varepsilon(t) |_{\varepsilon =0}=v$ and, assuming that $q(t)$ is a solution to Lagrange equation $\ddot q=-\nabla\mathcal{V}(q)$,
\begin{align*}
  \Bigl(\frac{\partial}{\partial\varepsilon}
  L\bigl(t,q_\varepsilon(t),\dot q_\varepsilon(t)\bigr)
  \Bigr)\Big|_{\varepsilon=0}={}&
  -\Bigl(\frac{\partial}{\partial\varepsilon}
  \mathcal{V}\bigl(q(t)+\varepsilon v\bigr)
  \Bigr)\Big|_{\varepsilon=0}=
  -\nabla\mathcal{V}\bigl(q(t)\bigr)\cdot v=\\
  ={}&
  \ddot q(t)\cdot v=
  \frac{d}{dt}\dot q\cdot v\,.
\end{align*}
The total derivative condition holds with $\varphi(t,q,\dot q):=v$ for all motions, and $\psi(t,q,\dot q):=\dot q\cdot v$, but unfortunately the corresponding first integral is again identically null, as in the previous Example~\ref{spaceShiftForOscillator}:
\begin{equation*}
  \partial_{\dot q}L(t,q,\dot q)\cdot\varphi(t,q,\dot q)-
  \psi(t,q,\dot q)=
  \dot q\cdot v-\dot q\cdot v=0\,.
\end{equation*}
However, the integral constant of motion of Theorem~\ref{theoremOfVeryGeneralConstantOfMotion} is not null:
\begin{align}
  \partial_{\dot q}
  L\bigl(t,q(t),&\dot q(t)\bigr)\cdot
  \partial_\varepsilon q_\varepsilon(t)
  \big|_{\varepsilon=0}
  -\int_{t_0}^t
  \frac{\partial}{\partial\varepsilon}
  L\bigl(s,q_\varepsilon(s),\dot q_\varepsilon(s)\bigr)
  \Big|_{\varepsilon=0}ds=\notag\\
  &=\dot q(t)\cdot v-
  \int_{t_0}^t
  \frac{\partial}{\partial\varepsilon}
  \bigl(- \mathcal{V}(q(s)+\varepsilon v)\bigr)
  \big|_{\varepsilon=0}ds=\label{initialVelocityAsConstant}\\
  &=\dot q(t)\cdot v-
  \int_{t_0}^t\bigl(- \nabla\mathcal{V}(q(s))\cdot v\bigr)ds=\notag\\
  &=\dot q(t)\cdot v-
  \int_{t_0}^t\ddot q(s)\cdot v\,ds=\notag\\
  &=\dot q(t)\cdot v-\bigl(\dot q(t)-\dot q(t_0)\bigr)\cdot v =
  \notag\\
  &=\dot q(t_0)\cdot v.\label{initialVelocityConstant}
\end{align}
There are classes of potentials $\mathcal{V}$ (scattering potentials, or cone potentials), for which all motions have a finite \emph{asymptotic velocity} $\dot q_\infty := \lim_{t \to+\infty}\dot q(t)$. The cone potential class includes the Toda lattice and Calogero's system that we mentioned in Section~\ref{todaLattice} and~\ref{homogeneousPotentialsSection}. For such systems, the formulas~\eqref{initialVelocityAsConstant} make sense also for $t_0=+\infty$, yielding the asymptotic velocity $p_\infty$ as a vector constant of motion. In the early 1990s the authors~\cite{GZscattering} developed a theory of the Liouville-Arnold integrability of cone potentials, using precisely the components of $\dot q_\infty$ as basic constants of motion.

\section{Kepler's problem}
\label{sectionRungeLenz}

The Lagrangian function of Kepler's problem with its associated Lagrange equation are
\begin{equation}\label{KeplerEquation}
  L\bigl(q,\dot q\bigr)=\frac{1}{2}
  \lVert\dot q\rVert^2+\frac{k}{\lVert q\rVert}\,,
  \qquad
  \ddot q(t)=-k\frac{q(t)}{\lVert q(t)\rVert^3}\,,
\end{equation}
where  $k>0$ is a parameter. Notice that for all Kepler motions $q$~is parallel to~$\ddot q$. We will assume that $q$ is a vector in~$\R^2$ to simplify some formulas.

As for all autonomous Lagrangians, the energy is conserved. The Lagrangian is also invariant under rotations around the origin. Noether's theorem gives the first integral of angular momentum $\det(q,\dot q)$.

Kepler's system enjoys also more recondite infinitesimal invariances, which lead to the Laplace-Runge-Lenz vector constant of motion. Next we will describe two different invariances: one that needs the Lagrange equation, and one that does not.

Let us take again a smooth trajectory~$q(t)$ in~$\R^2\setminus \{(0 ,0)\}$, a vector $u\in\R^2$, and define the family
\begin{equation}\label{familyrungelenz}
  \begin{split}
  q_\varepsilon(t):={}&q(t)+
  \bigl(q(t)\cdot u\bigr)
  q(t+\varepsilon)-
  \bigl(q(t+\varepsilon)\cdot u\bigr) q(t)=\\
  ={}&q(t)+\det\bigl(q(t),q(t+\varepsilon)\bigr)u^\perp
  \end{split}
\end{equation}
with $u^\perp=(\begin{smallmatrix}0&-1\\1&0\end{smallmatrix})u$. It is clear that $q_0(t)=q(t)$. This family $q_\varepsilon$ is different and simpler (even in its first order $\varepsilon$-expansion) than the one found in the literature (for example, see L\'evy-Leblond~\cite{Leblond}, formula~(36)), whose formula in our setting would read as
\begin{equation*}
 q(t)+\frac{\varepsilon}{2}
 \Bigl(2 \bigl(q(t)\cdot u\bigr) \dot q(t)-
 \bigl(\dot q(t)\cdot u\bigr) q(t)
 -\bigl(\dot q(t)\cdot q(t)\bigr) u\Bigr).
\end{equation*}
The reader can check that for our $q_\varepsilon$ the following relations hold:
\begin{equation*}
  \frac{\partial}{\partial\varepsilon}
  L\bigl(q_\varepsilon(t),\dot q_\varepsilon(t)\bigr)
  \Big|_{\varepsilon=0}=
  \det\bigl(q(t),\ddot q(t)\bigr)\det\bigl(u,\dot q(t)\bigr)+
  \frac{d}{dt}\biggl(k
  \frac{q(t)\cdot u}{\lVert q(t)\rVert}
  \biggr)\,.
\end{equation*}
If we define the BH-function
\begin{equation}\label{KeplerGauge}
 G(\varepsilon,t):=-\varepsilon k
 \frac{q(t)\cdot u}{\lVert q(t)\rVert},
\end{equation}
and if $q(t)$ is any motion for which $\ddot q(t)$ is parallel to $q(t)$, then we have infinitesimal invariance of the form~\eqref{gaugeAndSpace}:
\begin{equation*}
  \frac{\partial}{\partial\varepsilon}\biggl(
  L\bigl(q_\varepsilon(t),\dot q_\varepsilon(t)\bigr)+
  \partial_tG(\varepsilon,t)\biggr)
  \biggr|_{\varepsilon=0}\equiv 0\,,
\end{equation*}
which is covered by Theorem~\ref{NoetherWithSpaceChangeAndGauge}, condition~\ref{timeChangeAndGaugeCondition}, with the trivial $\tau(\varepsilon,t)\equiv t$. Also, $\ddot q(t)$ and $q(t)$ are parallel whenever $q(t)$ is a solution to Kepler's equation~\eqref{KeplerEquation}. Therefore Noether's Theorem~\ref{NoetherWithSpaceChangeAndGauge} (or Th.~\ref{NoetherTheoremWithTotalDerivative}) yields the following constant of motion:
\begin{equation}\label{rungeLenzComponent}
  (q\cdot u)\lVert\dot q\rVert^2-
  (\dot q\cdot u) (\dot q\cdot q) -
  k\frac{q\cdot u}{\lVert q\rVert}.
\end{equation}
Since the vector $u\in\R^2$ is arbitrary, we have the vector-valued first integral
\begin{equation*}
  q\lVert\dot q\rVert^2- (\dot q\cdot q)\dot q -
  k \frac{q}{\lVert q\rVert}\,,
\end{equation*}
which is called the Laplace-Runge-Lenz vector.

According to Theorem~\ref{EquivalenceOfThreeVersionsTheorem}, if we define the additional time change~$\mathcal{T}$ as
\begin{equation*}
  \mathcal{T}(\varepsilon,t):=\varepsilon
  \frac{\partial_\varepsilon G(0,t)}{L\bigl(q(t),
  \dot q(t)\bigr)}=
  -\varepsilon k \frac{q(t)\cdot u}{\lVert q(t)\rVert
  L\bigl(q(t),\dot q(t)\bigr)}\,.
\end{equation*}
there is infinitesimal invariance also under the nontrivial time change $(\varepsilon,t)\mapsto t+\mathcal{T}(\varepsilon,t)$, without BH-function.

A second way of deducing the Laplace-Runge-Lenz vector from Noether's theorem is described by Sarlet and Cantrijn~\cite[Sec.~6]{SarletCantrijn}, who use a general idea introduced by Djukic~\cite{Djukic} (see also Kobussen~\cite{Kobussen}). It has the advantage that the total derivative condition does not need the equations of motion. Translated into our notation, Sarlet and Cantrijn use the following triple space change~$q_\varepsilon$, time change $\theta_\varepsilon$ (here given in the form of Subsection~\ref{alternativeApproachToTimeSubsection}) and BH-function~$G$:
\begin{gather}
  \Xi(q,\dot q):=\begin{pmatrix}u_1&-u_2\\u_2&u_1\end{pmatrix}
  \binom{q\cdot \dot q}{
  \det(\dot q, q)},\quad
  q_\varepsilon(t):=q(t)+\varepsilon
  \Xi\bigl(q(t),\dot q(t)\bigr),\notag\\
  \theta_\varepsilon(t):=t+\varepsilon q(t)\cdot u,\quad
  \psi(q,\dot q):=\frac{1}{2}\dot q\cdot\Xi(q,\dot q),\\
  G(\varepsilon,t):=-\varepsilon\psi\bigl(q(t),\dot q(t)\bigr),
  \notag
\end{gather}
where again $u=(u_1,u_2)\in\R^2$ is a constant vector. The invariance is the one in Definition~\ref{alternativeGaugeInvarianceUnderSpaceAndTimeChangeDef}, formula~\eqref{alternativeGaugeInvarianceSpaceAndTimeLagrangian}, or, in terms of total derivative,
\begin{equation*}
  \frac{\partial}{\partial\varepsilon}\biggl(
  L\Bigl(q_\varepsilon(t),
  \frac{\dot q_\varepsilon(t)}{\theta'_\varepsilon(t)}
  \Bigr)\theta'_\varepsilon(t)
  \biggr)\bigg|_{\varepsilon=0}=
  \frac{d}{dt}\psi\bigl(q(t),\dot q(t)\bigr).
\end{equation*}
As already noted, unlike the previous derivation starting from the space chan\-ge~\eqref{familyrungelenz}, this total derivative identity holds for all smooth $q(t)$, without any need to use the Lagrange equations. From Theorem~\ref{alternativeFullNoetherWithSpaceAndTimeAndGauge}
we deduce the constant of motion
\begin{equation*}
  \begin{split}
  N={}& \partial_{\dot q}L\cdot
  \Bigl(\partial_\varepsilon q_\varepsilon|_{\varepsilon=0}-
  (\partial_\varepsilon\theta_\varepsilon|_{\varepsilon=0})
  \dot q\Bigr)+L\cdot
  \partial_\varepsilon\theta_\varepsilon|_{\varepsilon=0}+
  \partial_\varepsilon
  G|_{\varepsilon=0}=\cr
  ={}&-(q\cdot u)\lVert\dot q\rVert^2+
  (\dot q\cdot u) (\dot q\cdot q) +
  k\frac{q\cdot u}{\lVert q\rVert}\,,
  \end{split}
\end{equation*}
which coincides with the opposite of formula~\eqref{rungeLenzComponent}.

Here too we can easily ``trivialize'' either the time change or the BH-function, but we have to use Theorem~\ref{EquivalenceOfThreeVersionsUsingInverseOfThetaTheorem}
instead of Theorem~\ref{EquivalenceOfThreeVersionsTheorem}
because of the different style of the time change. For example, if we define
\begin{equation*}
  \mathcal{T}_\varepsilon(t):=\varepsilon\cdot
  \frac{\partial_\varepsilon G(\varepsilon,t)|_{\varepsilon=0}
  }%
  {L\bigl(q(t),
  \dot q(t)\bigr)}=
  -\varepsilon\cdot\frac{\psi\bigl(q(t),\dot q(t)\bigr)}
  {L\bigl(q(t),\dot q(t)\bigr)}\,,
\end{equation*}
we can eliminate the BH-function with this choice of space change $\mathcal{Q}_\varepsilon$ and time change $\Theta$:
\begin{equation*}
  \Theta_\varepsilon:=
  \theta_\varepsilon+\mathcal{T}_\varepsilon=
  \theta_\varepsilon-\varepsilon\frac{\psi}{L},\quad
  \mathcal{Q}_\varepsilon=
  q_\varepsilon+\varepsilon\cdot
  \bigl(\partial_\epsilon\mathcal{T}_\epsilon
  |_{\epsilon=0}\bigr)\dot q=
  q_\varepsilon-\varepsilon\cdot\frac{\psi}{L}\dot q.
\end{equation*}
Here too we have infinitesimal invariance without using the equations of motion.

Let us show a simple example where Noether's theorem is applied to a \emph{particular} solution, leading to a nontrivial function that is constant along that single solution, but not along most others. Consider the following family of uniform circular motions in the plane with the same period but a phase shift:
\begin{equation*}
  q_\varepsilon(t):=e^{\omega\varepsilon}R \bigl(\cos
  (\omega(\varepsilon+t)),\sin (\omega(\varepsilon+t))\bigr),
\end{equation*}
where $R>0$, $\omega=\sqrt{k/R^3}$. The function $t\mapsto q_\varepsilon(t)$ is a Kepler motion only when $\varepsilon=0$, as we check at once. Still, there is infinitesimal invariance with trivial $\tau(\varepsilon,t)\equiv0$, $G\equiv0$:
\begin{equation*}
  \frac{\partial}{\partial\varepsilon}
  L\bigl(q_\varepsilon(t),\dot q_\varepsilon(t)\bigr)
  \Big|_{\varepsilon=0}
  =\frac{\partial}{\partial\varepsilon}
  \Bigl(\frac{R^2\omega^2}{2}
  e^{2\omega \varepsilon}+\frac{k}{Re^{\omega\varepsilon}}\Bigr)
  \Big|_{\varepsilon=0}=0
\end{equation*}
If we apply Noether's theorem we obtain that the (square of) the speed $t\mapsto \lVert\dot q_0(t)\rVert^2$ is constant along the circular motion. The speed is clearly a nontrivial function that is not constant along any Kepler motions except circular ones.

\section{New superintegrable systems}
\label{sectionisochronous}

A recent paper~\cite{Zampieri} introduced the following Lagrangian systems in two dimensions:
\begin{equation*}
  L(x,y,\dot x,\dot y)=\dot x\dot y-g(x)y,
\end{equation*}
and exhibited some classes of the function~$g$ for which there is either weak Lyapunov instability or an isochronous center. What matters here is that those isochronous cases exhibit the rare property of being superintegrable, because they have three independent first integrals, out of four degrees of freedom. Here we are going to use Noether's theorem framework to compute new $g$ classes that lead to super-integrability.

Two first integrals are obvious, and they do not need any assumption on~$g$: $\frac12\dot x^2+V(x)$ and $\dot y\dot x+g(x) y$, where $V$ is any primitive of~$g$. Notice that the Lagrange equations are
\begin{equation}\label{lagrangeEquationsForIsochrony}
  \ddot x=-g(x)\,,\qquad
  \ddot y=-g'(x)y\,.
\end{equation}

Let us search for a third constant of motion starting from the following space-change family:
\begin{equation*}
  q_\varepsilon(t)=\Bigl(x(t)+\varepsilon f(x(t),\dot x(t)),
  y(t)\Bigr).
\end{equation*}
where $f$ is a function to be determined. We can compute, using also the Lagrange equations~\eqref{lagrangeEquationsForIsochrony}:
\begin{equation*}
  \frac{\partial}{\partial\varepsilon}
  L(q_\varepsilon(t), \dot q_\varepsilon(t))
  \big|_{\varepsilon=0}
  =\dot y\dot f -g'(x)yf=\frac{d}{dt}\bigl(y\dot f\bigr)-
  \bigl(\ddot f+g'(x)f\bigr)y\,.
\end{equation*}
If we impose that
\begin{equation}\label{conditionForTotalDerivative}
  \ddot f+g'(x)f\equiv0
\end{equation}
then we achieve the total derivative condition, there is infinitesimal BH-invari\-ance with the BH-function~$G(\varepsilon,t)=-\varepsilon y(t)\dot f$, and Noether's theorem gives the following first integral
\begin{equation}\label{additionalFirstIntegral}
  \partial_{\dot q}L\bigl(q(t),\dot q(t)\bigr)
  \cdot \partial_\varepsilon
  q_\varepsilon(t)|_{\varepsilon=0}+
  \partial_\varepsilon
  G(\varepsilon,t)
  |_{\varepsilon=0}=\dot y f-y\dot f\,.
\end{equation}
It is the first case so far where the BH-function~$G$ is a function of~$\dot q$, and not just of $t$ and~$q$. One more example will be in the next Section.

The condition~\eqref{conditionForTotalDerivative} is a partial differential equation in the unknown function~$f(x,\dot x)$. We will search for smooth solutions that are even functions of~$\dot x$:
\begin{equation}\label{ansatz}
  f(x,\dot x)=\alpha(x)+\dot{x}^2 \beta(x)
  +\dot{x}^4 h(x,\dot{x}^2)
\end{equation}
for new unknown functions $\alpha(x),\beta(x),h(x,\dot x)$. Let us replace the ansatz~\eqref{ansatz} into the equation~\eqref{conditionForTotalDerivative} and set $\dot x=0$. We get
\begin{equation}\label{conditionOnAlpha/g}
  0=\alpha (x) g'(x)-g(x)\alpha'(x)+2 g(x)^2 \beta (x)=
  g(x)^2\Bigl(2\beta(x)-\frac{d}{dx}\bigl(\alpha(x)/g(x)\bigr)
  \Bigr)
\end{equation}
This suggests to introduce the new function $\mu(x)=\alpha(x)/g(x)$, so that equation~\eqref{conditionOnAlpha/g} becomes simply $\beta=\mu'/2$. We can change the ansatz~\eqref{ansatz} into the more special form
\begin{equation*}
  f(x,\dot x)=g(x)\mu(x)+\frac{1}{2}\dot{x}^2 \mu'(x)
  +\dot{x}^4 h(x,\dot{x}^2)\,.
\end{equation*}
Let us try with the choice $h\equiv a$ constant. Equation~\eqref{conditionForTotalDerivative} becomes
\begin{multline*}
  \frac{1}{2} \dot{x}^4 \bigl(\mu'''(x)-6 a
   g'(x)\bigr)+\\
   +\frac{1}{2} \dot{x}^2
   \Bigl(24 a g(x)^2+2 \mu (x) g''(x)
   +3 g'(x) \mu '(x)-
   3 g(x)\mu''(x)\Bigr)=0\,,
\end{multline*}
which splits into the equations
\begin{equation*}
  \begin{cases}
  \mu'''(x)-6 a g'(x)=0\\
  24 a g(x)^2+2 \mu (x) g''(x)+3 g'(x) \mu '(x)-
   3 g(x)\mu''(x)=0\,,
  \end{cases}
\end{equation*}
to which we can apply the standard local existence results for~ODEs. Any solution of this equations will give a function~$g$ for which the system is superintegrable. The additional first integral~\eqref{additionalFirstIntegral} becomes
\begin{equation*}
  a \bigl(4 \dot{x}^3 y g(x)+\dot{x}^4 \dot{y}\bigr)
  -\frac{1}{2} \dot{x}^3 y\mu ''(x)
  +\frac{1}{2} \dot{x}^2 \dot{y} \mu '(x)
  -\dot{x} y \mu (x) g'(x)
  +\dot{y} g(x) \mu (x)\,.
\end{equation*}
It can be checked that the special case $a=0$, with the initial data $g(0)=0$, $g'(0)>0$, gives exactly the class of  systems that were found in Section~5, entitled ``Explicit superintegrable systems'', of the original paper~\cite{Zampieri}, for which the additional first integral is a polynomial of degree~3 with respect to~$\dot{x}$. The case $a\ne0$ brings superintegrable systems with a first integral containing a term in~$\dot{x}^4$.

With some patience it is possible to pursue these ideas further, for example by studying the case when $h(x,\dot x^2)$ is of the form $\gamma(x)+a\dot x^2$.

\section{Particle in a plane-wavelike external field}
\label{PlaneWaveLikeExample}

The following time-dependent Lagrangian is taken from a paper~\cite{Bobillo} by Bobil\-lo-Ares:
\begin{equation*}
  L(t,q,\dot q)=
  \frac{1}{2}\lVert\dot q\rVert^2-U\bigl(q-tu\bigr),\qquad
  q,\dot q\in\R^n,
\end{equation*}
where $u$ is a fixed vector in $\R^n$ and $U$ a smooth potential. The associated Lagrange equation is
\begin{equation}\label{plane-wave-likeLagrangeEquations}
  \ddot q+\nabla U\bigl(q-tu\bigr)=0
\end{equation}
In terms of the energy 
\begin{equation*}
  E(t,q,\dot q)=\partial_{\dot q}L(t,q,\dot q)\cdot\dot q
  -L(t,q,\dot q)
  =\frac{1}{2}\lVert\dot q\rVert^2+U\bigl(q-tu\bigr)\,,
\end{equation*}
it is easy to check that $\dot q\cdot u-E$ is a first integral. Let us see how we can deduce it from Noether's theorem in our framework.  Starting from a smooth~$q(t)$ and following Bobillo-Ares, we introduce the following space and time changes:
\begin{equation}\label{bobilloInfinitesimalInvariance}
  q_\varepsilon(t)=q(t)+\varepsilon u,\qquad
  \tau(\varepsilon,t):=t+\varepsilon\,.
\end{equation}
Let us try infinitesimal invariance:
\begin{align*}
  \frac{\partial}{\partial\varepsilon}\Bigl(&
  L\bigl(\xi,q_\varepsilon(\xi),\dot q_\varepsilon(\xi)\bigr)
  \Big|_{\xi=\tau(\varepsilon,t)}
  \partial_t\tau(\varepsilon,t)
  \Bigr)=\notag\\
  ={}& \frac{\partial}{\partial\varepsilon}\Bigl(
  \frac{1}{2}\lVert\dot
  q(t+\varepsilon)\rVert^2-
  U\bigl(q(t+\varepsilon)+\varepsilon
  u-(t+\varepsilon)u\bigr)\Bigr)=\notag\\
  ={}&\dot q(t+\varepsilon)\cdot\ddot q(t+\varepsilon)-
  \nabla U\bigl(q(t+\varepsilon)-tu\bigr)\cdot
  \dot q(t+\varepsilon)=\notag\\
  ={}&\dot q(t+\varepsilon)\cdot\biggl(
  2\ddot q(t+\varepsilon)-
  \Bigl(\ddot q(t+\varepsilon)+
  \nabla U\bigl(q(t+\varepsilon)-tu\bigr)\Bigr)\biggr)=\notag\\
  ={}&\frac{\partial}{\partial t}
  \lVert\dot q(t+\varepsilon)\rVert^2-
  \dot q(t+\varepsilon)\cdot
  \Bigl(\ddot q(t+\varepsilon)+
  \nabla U\bigl(q(t+\varepsilon)-tu\bigr)\Bigr)
  \,.\notag
\end{align*}
When $\varepsilon=0$ this expression becomes
\begin{equation*}
  \frac{d}{dt}
  \lVert\dot q(t)\rVert^2
  -\dot q(t)\cdot \Bigl(\ddot q(t)+
  \nabla U\bigl(q(t)-tu\bigr)\Bigr)\,.
\end{equation*}
which further reduces to
\begin{equation*}
  \frac{\partial}{\partial\varepsilon}\Bigl(
  L\bigl(\xi,q_\varepsilon(\xi),\dot q_\varepsilon(\xi)\bigr)
  \Big|_{\xi=\tau(\varepsilon,t)}
  \partial_t\tau(\varepsilon,t)
  \Bigr)\Big|_{\varepsilon=0}
  =\frac{d}{dt}
  \lVert\dot q(t)\rVert^2\,.
\end{equation*}
if $q(t)$ solves Lagrange equations~\eqref{plane-wave-likeLagrangeEquations}. This means that we have infinitesimal invariance as in Theorem~\ref{fullNoetherWithSpaceAndTimeAndGauge} with the following choice of BH-function:
\begin{equation*}
  G(\varepsilon,t):=-\varepsilon \lVert\dot q(t)\rVert^2\,,
\end{equation*}
and the first integral \eqref{constantAlongMotionForSpaceTimeChangeAndGauge}
given by Noether's theorem is
\begin{equation*}
  \dot q(t)\cdot u+L(t,q(t),\dot q(t))-\lVert\dot q(t)\rVert^2\,.
\end{equation*}
as expected.
This is one more example where the BH-function~$G$ depends on~$\dot q$, so that is not of the more familiar form~$\varepsilon\cdot\psi(t,q(t))$.

According to Theorem~\ref{EquivalenceOfThreeVersionsTheorem}, if we define the additional time change and BH-function
\begin{align*}
  \mathcal{G}(\varepsilon,t):={}&\varepsilon\cdot L\bigl(t,
  q(t),\dot q(t)\bigr)
  \bigl(\partial_\epsilon
  \tau(\epsilon,t)|_{\epsilon=0}\bigr)=
  \varepsilon\cdot L\bigl(t,
  q(t),\dot q(t)\bigr)\,,
  \label{auxiliaryGaugeForBobillo}\\
  \mathcal{T}(\varepsilon,t):={}&\varepsilon
  \frac{\partial_\varepsilon G(0,t)}{L\bigl(t,q(t),
  \dot q(t)\bigr)}=
  -\varepsilon\frac{\lVert\dot q(t)\rVert^2}
  {L\bigl(t,q(t),
  \dot q(t)\bigr)}\,,
\end{align*}
we can attain infinitesimal invariance also with either of the alternative choices
\begin{gather*}
  \tau_1(\varepsilon,t):=\tau(\varepsilon,t)+
  \mathcal{T}(\varepsilon,t)\,,
  \qquad G_1(\varepsilon,t)\equiv0\,,\\
  \tau_2(\varepsilon,t):=t\,,
  \qquad G_2(\varepsilon,t):= G(\varepsilon,t)+
  \mathcal{G}(\varepsilon,t)\,,
\end{gather*}
for the same space change~$q_\varepsilon$.

A possible alternative choice for the BH-function term in~\eqref{bobilloInfinitesimalInvariance} is the following
\begin{equation*}
  G_3(\varepsilon,t):=
  \begin{cases}
  -\bigl\|
  q(t+\varepsilon)-q(t)\bigr\|^2/\varepsilon
  &\text{if }\varepsilon\ne0\\
  0&\text{if }\varepsilon=0\,,
  \end{cases}
\end{equation*}
which is not linear in~$\varepsilon$, and is not a point function of~$q(t),\dot q(t)$.


\end{document}